\def\N{{\mathbb N}}
\def\Z{{\mathbb Z}}
\def\R{{\mathbb R}}
\def\C{{\mathbb C}}
\def\H{{\mathfrak H}}
\def\sq{\hbox{\rlap{$\sqcap$}$\sqcup$}}
\def\qed{\ifmmode\sq\else{\unskip\nobreak\hfil
         \penalty50\hskip1em\null\nobreak\hfil\sq
         \parfillskip=0pt\finalhyphendemerits=0\endgraf}\fi}
\def\M{{\mathcal M}}
\def\N{{\mathcal N}}
\def\T{{\mathcal T}}
\newtheorem{theorem}{Theorem}
\newtheorem{lemma}[theorem]{Lemma}
\newtheorem{prop}[theorem]{Proposition}
\newtheorem{cor}[theorem]{Corollary}
\numberwithin{theorem}{section}
\numberwithin{equation}{section}
\title[Rankin-Selberg method for Jacobi forms]{
Rankin-Selberg method for Jacobi forms of integral weight
and of half-integral weight on symplectic groups.
}
\author[S.~Hayashida]{Shuichi Hayashida
}
\date{\today}
\keywords{Siegel modular forms, Jacobi forms, Dirichlet series, Half-integral weight}
\subjclass[2010]{11F46 (primary), 11F37, 11F50 (secondary)}
\begin{document}

\begin{abstract}
In this article we show analytic properties of
certain Rankin-Selberg type Dirichlet series for holomorphic Jacobi cusp forms
of integral weight and of half-integral weight.
The numerators of these Dirichlet series are the inner products of Fourier-Jacobi coefficients
of two Jacobi cusp forms.
The denominators and the range of summation of these Dirichlet series
are like the ones of the Koecher-Maass series.
The meromorphic continuations and functional equations of these Dirichlet series
are obtained.
Moreover,
an identity between the Petersson norms of Jacobi forms 
with respect to linear isomorphism between Jacobi forms of integral weight and
half-integral weight is also obtained.
\end{abstract}

\maketitle

\section{Introduction}
\subsection{}
The aim of this paper is to show analytic properties of certain Rankin-Selberg type
Dirichlet series of Jacobi cusp forms.
In~\cite{KoSk} Kohnen and Skoruppa introduced the following Dirichlet series
associated to Siegel cusp forms of degree two: 
\begin{eqnarray*}
  \sum_{m=1}^\infty \frac{\langle f_m, g_m \rangle}{m^s},
\end{eqnarray*}
where $f_m$ and $g_m$ are $m$-th Fourier-Jacobi coefficients of 
two Siegel cusp forms of degree two $F$ and $G$, respectively,
and where $\langle f_m, g_m \rangle$ is the Petersson inner product of Jacobi cusp forms
$f_m$ and $g_m$.
They showed a meromorphic continuation and the functional equation of this Dirichlet series.
This result has been generalized to Siegel cusp forms of arbitral degree $n$ by Yamazaki~\cite{Ya3}.
It means that $F$ and $G$ can be replaced by Siegel cusp forms of arbitral degree $n$
and $f_m$ and $g_m$ are Fourier-Jacobi coefficients of $F$ and $G$ of
not only integer index, but also matrix index $m$.
If $m$ runs over matrices of a fixed size, the range of summation
and the denominators of Yamazaki Dirichlet series are the same to the ones
of the Koecher-Maass Dirichlet series.

In this paper we start with two Jacobi cusp forms $\phi_\M$ and $\psi_\M$
instead of two Siegel cusp forms $F$ and $G$.
Here $\phi_\M$ and $\psi_\M$ are Jacobi cusp forms of index $\M$ and
where $\M$ is a half-integral symmetric matrix. 
The Fourier-Jacobi coefficients $\phi_\N$ and $\psi_\N$ of
$\phi_\M$ and $\psi_\M$, respectively, are also Jacobi cusp forms.
We will show a meromorphic continuation and the functional equation of
similar Dirichlet series as above.

The importance of this result is that one can apply it to obtain also similar result
for Jacobi cusp forms (which includes Siegel cusp forms)
of \textit{half-integral weight} in a generalized plus space.
This result is a generalization of the result for the Rankin-Selberg type Dirichlet series
of Siegel cusp forms of half-integral weight obtained by Katsurada and Kawamura in
~\cite[Corollary to Proposition 3.1]{KaKa}. 
It means that in~\cite{KaKa} they treated Fourier coefficients of Siegel cusp forms of
half-integral weight to construct the Rankin-Selberg type Dirichlet series.
In this paper
we shall treat \textit{Jacobi cusp forms of half-integral weight of
arbitral degree} instead of Siegel cusp forms of half-integral weight
and we also shall treat \textit{Fourier-Jacobi coefficients}
instead of Fourier coefficients.

\subsection{}
To be more precise,
let $L_n^*$ be the set of all half-integral symmetric matrices of size $n$
and $L_n^+$ be the subset of all positive-definite matrices in $L_n^*$.

We fix a matrix $\M \in L_{r}^+$.
We set
\begin{eqnarray*}
  L_{t,r}^+(\M) &:=& \left\{  \mathcal{N} = \begin{pmatrix} N & \frac12 R \\ \frac12 {^t R} & \M \end{pmatrix} 
   \in L_{t+r}^+ \, | \,  N \in  L_t^+ , R \in \Z^{(t,r)}\right\}
\end{eqnarray*}
and we put
\begin{eqnarray*}
  B_{t,r}(\Z) &:=&
  \left\{
    \gamma = \begin{pmatrix} \gamma_1 & \gamma_2 \\ 0 & 1_r  \end{pmatrix}
    \in GL(t+r,\Z)
    \, | \,
    \gamma_1 \in GL_t(\Z),  \gamma_2 \in \Z^{(t,r)}
  \right\} .
\end{eqnarray*}
The group $B_{t,r}(\Z)$ acts on $L_{t,r}^+(\M)$ by
$\gamma \cdot \N := \N[^t \gamma]$
for $\gamma \in B_{t,r}(\Z)$ and $\N \in L_{t,r}^+(\M)$.
Here we put $X[Y] := {^t Y} X Y$ for matrices $X$ and $Y$ of suitable sizes.

We set
\begin{eqnarray*}
    \epsilon(\M)  
    &:=& \{
   \gamma \in GL(r,\Z) \, | \, 
  \gamma \cdot \M = \M \}.
\end{eqnarray*}
For $\mathcal{N} \in L_{t,r}^+(\M)$ we set
\begin{eqnarray*}
  \epsilon_{t,r}(\mathcal{N})
  &:=& \{
   \gamma \in B_{t,r}(\Z) \, | \, 
   \gamma \cdot \N = \N \}.
\end{eqnarray*}

The symbol $\mbox{Sp}(n,\R)$ denotes the real symplectic group of size $2n$.
We put $\Gamma_n$ := $\mbox{Sp}(n,\Z) = \mbox{Sp}(n,\R) \cap \Z^{(2n,2n)}$.
The symbol $\H_n$ denotes the Siegel upper half space of size $n$.

Let $\phi_{\M}$ and $\psi_{\M}$ be Jacobi cusp forms of weight $k$ of index $\M$
on $\Gamma_n$
(cf.~\cite[Definition 1.3]{Zi}).
In the case $r = 0$ we regard $\phi_\M$ and $\psi_\M$  as Siegel cusp forms of weight $k$
with respect to $\Gamma_n$.
Let $0 < t \leq n$ and
we take Fourier-Jacobi expansions of $\phi_{\M}$ and $\psi_{\M}$:
\begin{equation}\label{id:theta_decom_phi_psi}
\begin{split}
  \phi_{\M}(\tau,z) e(\M \omega)
  &=
  \sum_{\N \in L_{t,r}^+(\M)} \phi_{\N}(\tau',z') e(\N \omega'), \\
  \psi_{\M}(\tau,z) e(\M \omega)
  &=
  \sum_{\N \in L_{t,r}^+(\M)} \psi_{\N}(\tau',z') e(\N \omega'),
\end{split}
\end{equation}
where
$\begin{pmatrix} \tau & z \\ ^t z & \omega \end{pmatrix}
= \begin{pmatrix} \tau' & z' \\ ^t z' & \omega' \end{pmatrix}$,
$\tau \in \H_n$, $\omega \in \H_r$, $z \in \C^{(n,r)}$,
$\tau' \in \H_{n-t}$, $\omega' \in \H_{t+r}$ and $z' \in \C^{(n-t,t+r)}$.
We have the fact that $\phi_\N$ and $\psi_\N$ are Jacobi cusp forms of
weight $k$ of index $\N$ on $\Gamma_{n-t}$.

The aim of this article is to obtain analytic properties of the Dirichlet series
\begin{eqnarray}\label{def:dirichlet}
    D_t(\phi_{\M}, \psi_{\M} ; s)
   & := &
    \sum_{\mathcal{N} 
    \in B_{t,r}(\Z) \backslash L_{t,r}^+(\M) 
    } \frac{\langle  \phi_{\mathcal{N}}, \psi_{\mathcal{N}} \rangle}{
    |\epsilon_{t,r}(\mathcal{N})| \det(\mathcal{N})^{s}},
\end{eqnarray}
where we denote by $\langle \phi_\N, \psi_\N \rangle$ the Petersson inner product
of $\phi_\N$ and $\psi_\N$ (see \S~\ref{s:jacobi_int}).
This Dirichlet series converges for sufficiently large $\mbox{Re}(s)$
(see Lemma~\ref{lem:Diri_conv}).
If $t=n$, then $\phi_\N$ and $\psi_\N$ are Fourier coefficients of $\phi_\M$
and $\psi_\M$, respectively,
and we set $\langle \phi_\N, \psi_\N \rangle = \phi_\N  \overline{\psi_\N}$
in this case.
We remark
$
 \displaystyle{ D_1(\phi_\M,\psi_\M) =
   \frac12
       \sum_{\mathcal{N} \in \Z_{>0} 
    } \frac{\langle  \phi_{\mathcal{N}}, \psi_{\mathcal{N}} \rangle}{
    \mathcal{N}^{s}}}
$ if $t = 1$ and $r = 0$.

Remark that if $r = 0$ and if $n \geq 2$,
then $\phi_{\M}$ and $\psi_{\M}$ are Siegel cusp forms.
And analytic properties
of the Dirichlet series $D_t(\phi_{\M}, \psi_{\M} ; s)$ in the case $r=0$
have been shown by Maass~\cite{HM73} (for $t = n = 2$),
by Kohnen-Skoruppa \cite[Theorem 1]{KoSk} (for $t=1$ and $n=2$),
by Kalinin \cite{kal84} (for $t = n \geq 2$)
and by Yamazaki \cite{Ya3} (for $t \geq 1$ and $n \geq 2$).
Moreover, if $r=1$, then meromorphic continuations, functional equations and residues of 
the above Dirichlet series have been shown in the papers by
Kohnen-Zagier~\cite[p.189-191]{KoZa} (for $t = n = 1$ and $\M = 1$),
by Katsurada-Kawamura\cite[Proposition 3.1]{KaKa} (for $t = n \geq 2$ and $\M = 1$)
and by Imamo\=glu-Martin\cite[Theorem 2(d) and Proposition 1(b)]{ImaMa} (for $t = n = 1$ and for arbitral integer $\M$).

The main result in the present article is that the Dirichlet series
$D_t(\phi_{\M}, \psi_{\M} ; s)$ (for $1 \leq t \leq n$ and for arbitral index $\M$)
has a meromorphic continuation
to the whole complex plane and has  a functional equation (see Theorem~\ref{th:D_t_M}).
The residue at $s =  k - \frac{r}{2}$ is also determined   
(cf. Theorem~\ref{th:D_t_M}).
Such properties are shown by using Rankin-Selberg method. 
The properties of certain real analytic Siegel-Eisenstein series,
which are necessarily to 
prove Theorem~\ref{th:D_t_M},
have been shown by Kalinin~\cite{kalinin} (for $t=n$) and by Yamazaki~\cite{Ya3}
(for $1\leq t < n$).
Hence the main issue in this paper is to obtain an integral expression of
the Dirichlet series $D_t(\phi_{\M}, \psi_{\M} ; s)$ by using such Siegel-Eisenstein series.
To obtain the integral expression, we shall refine a method treated by Katsurada and Kawamura
in~\cite[Proposition 3.1]{KaKa}.
It means that we take vector valued modular forms through theta decompositions
of two Jacobi forms and take the inner product of these two vector valued modular forms. 
By coupling this inner product with a certain Siegel-Eisenstein series
we obtain the integral expression of the Dirichlet series $D_t(\phi_{\M}, \psi_{\M} ; s)$
(cf. Proposition~\ref{prop:int_D}).
To show the integral expression of the Dirichlet series,
we use the compatibility between the theta decomposition and
the Fourier-Jacobi expansion of Jacobi forms.

\subsection{}
As for half-integral weight case, we will show also analytic properties
for similar Dirichlet series of certain Jacobi cusp forms (including certain Siegel cusp forms)
of {\it half-integral weight} of \textit{certain indices}.
Such Jacobi forms of half-integral weight belong to the so-called plus-space,
which is a generalization of Kohnen plus-space
of elliptic modular forms of half-integral weight.
A generalization of Kohnen plus-space for Jacobi forms has been introduced in~\cite{matrix_integer}.
Let $\M = \begin{pmatrix} N & \frac12 R \\ \frac12 {^t R} & 1 \end{pmatrix}$ be a matrix
in $L_{r-1,1}^+(1)$.
We put $\mathfrak{M} = 4N - R {^t R}$.
We assume that $k$ is an even integer.
It is shown in~\cite{Ib} (for $r=1$) and in ~\cite{matrix_integer} (for $r > 1$)
that there exists a linear isomorphism
between the space $J_{k,\M}^{(n)}$ of Jacobi forms of weight $k$ of index $\M$ on $\Gamma_n$
and the space of Jacobi forms $J_{k-\frac12,\mathfrak{M}}^{(n)+}$,
where $J_{k-\frac12,\mathfrak{M}}^{(n)+}$ is a certain subspace of
Jacobi forms of weight $k-\frac12$ of index $\mathfrak{M}$ on $\Gamma_n$ (see \S\ref{s:plus_space_jacobi_forms}).
By this linear isomorphism the Fourier coefficients
of forms in  $J_{k,\M}^{(n)}$ and $J_{k-\frac12,\mathfrak{M}}^{(n)+}$ corresponds each other.

We remark that if $r=1$, then $\mathfrak{M} = \emptyset$ and
$J_{k-\frac12,\mathfrak{M}}^{(n)+}$ is the plus-space
of Siegel modular forms of weight $k-\frac12$ introduced by
Kohnen~\cite{Ko} (for $n=1$) and by Ibukiyama~\cite{Ib} (for $n > 1$).
Let $\phi_\mathfrak{M}$ and $\psi_\mathfrak{M}$ be Jacobi cusp forms in
$J_{k-\frac12,\mathfrak{M}}^{(n)+}$. We construct a Dirichlet series
$D_t(\phi_{\mathfrak{M}}, \psi_{\mathfrak{M}} ; s)$ in the same manner
as in the case of integral weight.
Then, by using the linear isomorphism between $J_{k,\M}^{(n)}$
and $J_{k-\frac12,\mathfrak{M}}^{(n)+}$,
we obtain a meromorphic continuation, a functional equation and residues
of $D_t(\phi_{\mathfrak{M}}, \psi_{\mathfrak{M}} ; s)$
(see Theorem~\ref{th:D_t_M_half}).

\ \\

\noindent
Acknowledgement:

This work was supported by JSPS KAKENHI Grant Number 80597766.

\section{Dirichlet series 
of Jacobi forms of integral weight}\label{s:jacobi_int}

We denote by $\H_n$ the Siegel upper half space of size $n$.
For any ring $R$, we denote by $R^{(l,m)}$ the set of all matrices
of size $l \times m$ with the entries in $R$.
The symbol $0^{(l,m)}$ denotes the zero matrix in $\C^{(l,m)}$.
We denote by $\delta_{i,j}$ the Kronecker delta.
It means that $\delta_{i,j} = 1$ if $i = j$ and 0 otherwise.
By abuse of language we put $\det(\M) = \det(4\M) = 1$, if the size of the matrix
$\M$ is $0$.

Let $k$ be an integer and let $\M \in L_r^+$.
We denote by $J_{k,\M}^{(n)}$ (resp. $J_{k,\M}^{(n)\, cusp}$) the space of Jacobi forms
(resp. Jacobi cusp forms)
of weight $k$ of index $\M$ on $\Gamma_n$.
(See the definition~\cite[Definition 1.3]{Zi}).

For $\phi$, $\psi$ $\in$ $J_{k,\M}^{(n)\, cusp}$, 
the Petersson inner product is defined by
\begin{eqnarray*}
  \langle \phi, \psi \rangle
  &:=&
  \int_{\mathcal{F}_{n,r}}
    \phi(\tau,z) \overline{ \psi(\tau,z)} e^{-4\pi Tr( \M v^{-1}[y])} \det(v)^{k - n - r - 1 } 
    \, du\, dv\, dx\, dy,
\end{eqnarray*}
where $\mathcal{F}_{n,r} := \Gamma_{n,r}^J \backslash (\H_n \times \C^{(n,r)}) $,
$\tau = u + i v$, $z = x + i y$,
$du = \prod_{i \leq j} u_{i,j}$,  $d v = \prod_{i \leq j} v_{i,j}$,
$dx = \prod_{i,j} x_{i,j}$ and $dy = \prod_{i,j} y_{i,j}$.
Here we put
\begin{eqnarray*}
  \Gamma_{n,r}^J
&:=&
  \left. \left\{
  \begin{pmatrix}
    A & 0 & B  & * \\
    * & 1_r & * & * \\
    C & 0 & D & * \\
    0 & 0 & 0 & 1_r
  \end{pmatrix}
  \in \Gamma_{n+r}
  \, \right| \,
  \begin{pmatrix}
    A & B \\ C & D 
  \end{pmatrix}
  \in \Gamma_n
  \right\} ,
\end{eqnarray*}
and the group $\Gamma_{n,r}^J$ acts on $\H_n \times \C^{(n,r)}$
in the usual manner (cf. \cite[p.193]{Zi}).
For the sake of simplicity we put $J_{k,\M}^{(0)\, cusp} := \C$ and for
$\phi$, $\psi$ $\in$ $\C$,
we set $\langle \phi, \psi \rangle := \phi \overline{\psi}$.

\begin{lemma}\label{lem:FJc_norm}
Let $\phi_\N$ be the $\N$-th Fourier-Jacobi coefficient of $\phi_\M$ defined in~(\ref{id:theta_decom_phi_psi}).
Then, there exists a constant $C'$ which does not depend on the choice of $\N$ such that
\begin{eqnarray*}
   |\phi_\N(\tau',z') e(i \N v'^{-1}[y']) (\det v')^{\frac{k}{2}} | 
  &<&
    C'
     \det(\N)^{\frac{k}{2}}
\end{eqnarray*}
for any $(\tau',z') \in \H_{n-t} \times \C^{(n-t,t)}$,
and where $v' = \mbox{Im}(\tau')$ and $y' = \mbox{Im}(z')$.
\end{lemma}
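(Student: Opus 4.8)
The plan is to recover $\phi_\N$ from $\phi_\M$ by Fourier inversion in the ``period'' block that the Fourier--Jacobi expansion~(\ref{id:theta_decom_phi_psi}) splits off, and then to minimise over the imaginary part of that block, in the spirit of the classical estimate $|a(T)|\ll\det(T)^{k/2}$ for the Fourier coefficients of a Siegel cusp form.

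First I record that, $\phi_\M$ being a Jacobi cusp form, its natural majorant is bounded: there is a constant $C$, depending on $\phi_\M$ but on nothing below, such that $|\phi_\M(\tau,z)|\le C(\det v)^{-k/2}e^{2\pi Tr(\M v^{-1}[y])}$ for all $(\tau,z)\in\H_n\times\C^{(n,r)}$, where $v=\mbox{Im}(\tau)$, $y=\mbox{Im}(z)$ (equivalently, $|\phi_\M(\tau,z)|(\det v)^{k/2}e^{-2\pi Tr(\M v^{-1}[y])}$ is a bounded function; being $\Gamma_{n,r}^J$-invariant, this is the usual consequence of cuspidality, and in arbitrary degree the cleanest justification passes to the theta decomposition of $\phi_\M$ and invokes the analogous bound for vector valued Siegel cusp forms of weight $k-\tfrac r2$). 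Writing $\Omega=\smat{\tau'}{z'}{{}^t z'}{\omega'}=\smat{\tau}{z}{{}^t z}{\omega}\in\H_{n+r}$ as in~(\ref{id:theta_decom_phi_psi}), that identity is a Fourier expansion of $\phi_\M(\tau,z)\,e(\M\omega)$ in the real symmetric matrix $\mbox{Re}(\omega')$. Holding $\tau'$, $z'$ and $W:=\mbox{Im}(\omega')$ fixed, Fourier inversion therefore expresses $\phi_\N(\tau',z')\,e^{-2\pi Tr(\N W)}$ as the average of $\phi_\M(\tau,z)\,e(\M\omega)\,e(-\N\,\mbox{Re}(\omega'))$ over $\mbox{Re}(\omega')$ in a fundamental domain, where $\tau$, $z$, $\omega$ are the sub-blocks of $\Omega$ of the indicated shapes.

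The point that makes the estimate work is that, after the triangle inequality and the majorant bound for $\phi_\M$, the integrand no longer depends on $\mbox{Re}(\omega')$, since $\mbox{Im}(\tau)$, $\mbox{Im}(z)$, $\mbox{Im}(\omega)$ are determined by $v'=\mbox{Im}(\tau')$, $y'=\mbox{Im}(z')$ and $W$ alone; thus the normalising volume cancels, and for every $W$ with $\mbox{Im}(\Omega)>0$ one gets
\[
 |\phi_\N(\tau',z')|\,e^{-2\pi Tr(\N W)}\ \le\ C\,(\det\mbox{Im}(\tau))^{-k/2}\,e^{2\pi Tr\bigl(\M(\mbox{Im}(\tau)^{-1}[\mbox{Im}(z)]-\mbox{Im}(\omega))\bigr)}.
\]
Now I feed in the block/Schur-complement calculus for $\mbox{Im}(\Omega)=\smat{v'}{y'}{{}^t y'}{W}>0$: set $T:=W-v'^{-1}[y']>0$, of size $t+r$, let $T_1$ and $P$ be its leading $t\times t$ block and off-diagonal block, and let $S$ be the Schur complement of $T_1$ in $T$. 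Then $\det\mbox{Im}(\tau)=\det(v')\det(T_1)$ and $\mbox{Im}(\omega)-\mbox{Im}(\tau)^{-1}[\mbox{Im}(z)]=S$, so, using $Tr(\N W)=Tr(\N T)+Tr(\N v'^{-1}[y'])$, the two occurrences of $Tr(\M S)$ cancel, and after moving $e^{-2\pi Tr(\N v'^{-1}[y'])}(\det v')^{k/2}$ to the left one obtains, with $\N=\smat{N}{\frac12 R}{\frac12{}^t R}{\M}$,
\[
 |\phi_\N(\tau',z')|\,e^{-2\pi Tr(\N v'^{-1}[y'])}\,(\det v')^{k/2}\ \le\ C\,(\det T_1)^{-k/2}\,e^{2\pi\bigl(Tr(NT_1)+Tr({}^t R\,P)+Tr(\M\,T_1^{-1}[P])\bigr)}
\]
for every $T_1>0$ and every $P$ (and an $S>0$, which has dropped out). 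Minimising the strictly convex exponent over $P$, at $P=-\tfrac12 T_1 R\M^{-1}$, collapses the $P$- and $\M$-dependence to $e^{2\pi Tr(\N^{\ast} T_1)}$ with $\N^{\ast}:=N-\tfrac14 R\M^{-1}{}^t R>0$ the Schur complement of $\M$ in $\N$; minimising $(\det T_1)^{-k/2}e^{2\pi Tr(\N^{\ast} T_1)}$ over $T_1>0$, at $T_1=\tfrac{k}{4\pi}(\N^{\ast})^{-1}$, gives a constant depending only on $k$ and $t$ times $\det(\N^{\ast})^{k/2}=\det(\N)^{k/2}/\det(\M)^{k/2}$. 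Since $C$, $\det(\M)$ and that numerical constant are independent of $\N$, this is the assertion, with $C'$ a suitable multiple of $C$.

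The main obstacle here is bookkeeping rather than a conceptual difficulty: one has to check that the optimal choice of $W$ (equivalently, the optimal $T_1$ and $P$, together with an arbitrary $S>0$) keeps $\mbox{Im}(\Omega)$ positive definite and is hence admissible, and one has to track the nested Schur-complement identities carefully enough that exactly the natural majorant $(\det v')^{k/2}e^{-2\pi Tr(\N v'^{-1}[y'])}$ of $\phi_\N$ appears on the left while nothing $\N$-dependent other than $\det(\N)^{k/2}$ survives on the right. The only other step needing a (standard) word of justification is the boundedness of the $\phi_\M$-majorant used at the outset, for which, in arbitrary degree, the theta-decomposition route is the most economical.
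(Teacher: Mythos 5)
Your argument is correct and is essentially the proof in the paper: recover $\phi_\N$ by Fourier inversion in $\mbox{Re}(\omega')$, apply the cusp-form majorant bound for $\phi_\M$, and then choose the free imaginary part $W$ so that, after the Schur-complement bookkeeping, only $\det(\N^\ast)^{k/2}=\det(\N)^{k/2}\det(\M)^{-k/2}$ survives. The only (immaterial) difference is that you genuinely minimise over $T_1$ and $P$, landing at $T_1=\tfrac{k}{4\pi}(\N^\ast)^{-1}$, whereas the paper simply exhibits the admissible choice $\Delta=(\N^\ast)^{-1}$ with the analogous off-diagonal block, which yields the same power of $\det(\N)$ up to a constant.
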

\begin{proof}

Since $\phi_\M$ is a Jacobi cusp form, there exists a constant $C_\phi$
which depends only on $\phi_\M$ such that
\begin{eqnarray*}
  |\phi_\M(\tau,z) \det(v)^{\frac{k}{2}} exp(-2\pi \, \mbox{Tr}(\M v^{-1}[y])) | < C_\phi
\end{eqnarray*}
for any $(\tau,z) \in \H_n \times \C^{(n,r)}$,
and where $v = \mbox{Im}(\tau)$ and $y = \mbox{Im}(z)$.
On the other hand, we have
\begin{eqnarray*}
  &&
  \phi_\N(\tau',z') e(i \N T') \\
  &=& 
  \int_{Sym_r(\Z) \backslash Sym_r(\R)}
  \int_{\R^{(t,r)}}
  \int_{Sym_t(\Z) \backslash Sym_t(\R)}
    \phi_\M\left( \tau + \left(\begin{smallmatrix} 0 & 0 \\ 0 & X'_1 \end{smallmatrix} \right),
     z + \left( \begin{smallmatrix} 0 \\  X'_3 \end{smallmatrix} \right) \right) \\
     &&
     \times
     e(\M(\omega+ X'_2))
    e(-\N \left(\begin{smallmatrix} X'_1 & X'_3 \\ ^t X'_3 & X'_2 \end{smallmatrix} \right) )
    \, dX'_1 \, dX'_3 \, dX'_2, 
\end{eqnarray*}
where $\left(\begin{smallmatrix} \tau & z \\ ^t z & \omega \end{smallmatrix} \right)  
= \left(\begin{smallmatrix} \tau' & z' \\ ^t z' & \omega' \end{smallmatrix} \right)  \in \H_{n+r}$,
$\tau \in \H_n$, $\omega \in \H_r$, $z \in M_{n,r}(\C)$,
$\tau' \in \H_{n-t}$, $\omega' \in \H_{t+r}$,
and $z' \in M_{n-t,t+r}(\C)$.

We decompose the matrix
$\left(\begin{smallmatrix} \tau & z \\ ^t z & \omega \end{smallmatrix} \right)  $ as
\begin{eqnarray*}
\left(\begin{matrix} \tau & z \\ ^t z & \omega \end{matrix} \right)  
=
\left(\begin{matrix} \tau' & z'_1 & z'_2 \\ ^t z'_1 & \tau_2 & z_2 \\ ^t z'_2 & ^t z_2 & \omega
\end{matrix} \right)  
=
 \left(\begin{matrix} \tau' & z' \\ ^t z' & \omega' \end{matrix} \right)  \in \H_{n+r}
\end{eqnarray*}
with
\begin{eqnarray*}
 \tau = \begin{pmatrix} \tau' & z'_1 \\ ^t z'_1 & \tau_2 \end{pmatrix},
 z = \left( \begin{matrix} z'_2 \\ z_2 \end{matrix} \right),
 z'  =\left( \begin{matrix} z'_1 &  z'_2 \end{matrix} \right),
 \omega' = \begin{pmatrix} \tau_2 & z_2 \\ ^t z_2 & \omega \end{pmatrix}, \\
 z'_1 \in \C^{(n-t,t)},
 z'_2 \in \C^{(n-t,r)},
 z_2 \in \C^{(t,r)},
 \mbox{ and }
 \tau_2 \in \H_t.
\end{eqnarray*}

We write
\begin{eqnarray*}
\left(\begin{matrix} v' & y'_1 & y'_2 \\ ^t y'_1 & v_2 & y_2 \\ ^t y'_2 & ^t y_2 & T
\end{matrix} \right)
&:=&
 \mbox{Im}\left(\begin{matrix} \tau' & z'_1 & z'_2 \\ ^t z'_1 & \tau_2 & z_2 \\ ^t z'_2 & ^t z_2 & \omega
\end{matrix} \right) ,
\end{eqnarray*}
$v' = \mbox{Im}(\tau')$, $v_2 = \mbox{Im}(\tau_2)$, $T = \mbox{Im}(\omega)$
and
$\displaystyle{
  T'
  :=
  \mbox{Im}(\omega')
  =  
  \left(\begin{matrix}   v_2 & y_2 \\  ^t y_2 & T \end{matrix} \right)}
$.

Furthermore, we write
$y' := \mbox{Im}(z') = 
 \mbox{Im}\begin{pmatrix} z'_1 & z'_2\end{pmatrix} = \begin{pmatrix} y'_1 & y'_2\end{pmatrix}$,
then we have
\begin{eqnarray*}
 &&
  |\phi_\N(\tau',z') e(i \N v'^{-1}[y']) (\det v')^{\frac{k}{2}} | \\
  &=&
  \left|
    e(-i\N T')
      \int_{Sym_r(\Z) \backslash Sym_r(\R)}
  \int_{\R^{(t,r)}}
  \int_{Sym_t(\Z) \backslash Sym_t(\R)}
    \phi_\M\left( \tau + \left(\begin{smallmatrix} 0 & 0 \\ 0 & X'_1 \end{smallmatrix} \right),
     z + \left( \begin{smallmatrix} 0 \\  X'_3 \end{smallmatrix} \right) \right)
          \right.
     \\
     &&
     \left.
     \times
     e(\M(\omega+ X'_2))
    e(-\N \left(\begin{smallmatrix} X'_1 & X'_3 \\ ^t X'_3 & X'_2 \end{smallmatrix} \right) )
    \, dX'_1 \, dX'_3 \, dX'_2
    \ e(i \N v'^{-1}[y']) (\det v')^{\frac{k}{2}}
    \right| \\
  &=&
  \left|
    e(-i\N T')
      \int_{Sym_r(\Z) \backslash Sym_r(\R)}
  \int_{\R^{(t,r)}}
  \int_{Sym_t(\Z) \backslash Sym_t(\R)}
    \phi_\M\left( \tau + \left(\begin{smallmatrix} 0 & 0 \\ 0 & X'_1 \end{smallmatrix} \right),
     z + \left( \begin{smallmatrix} 0 \\  X'_3 \end{smallmatrix} \right) \right)
          \right.
     \\
     &&
     \left.
     \times
     \det(\mbox{Im}(\tau + \left(\begin{smallmatrix} 0 & 0 \\ 0 & X'_1 \end{smallmatrix} \right)))^{\frac{k}{2}}
     e(i \M(\mbox{Im}(\tau +  \left(\begin{smallmatrix} 0 & 0 \\ 0 & X'_1 \end{smallmatrix} \right))^{-1})[
     \mbox{Im}(z + \left( \begin{smallmatrix} 0 \\  X'_3 \end{smallmatrix} \right)  )])     
     \right.
     \\
     &&
     \left.
    \times  e(\M(\omega+ X'_2))
    e(-\N \left(\begin{smallmatrix} X'_1 & X'_3 \\ ^t X'_3 & X'_2 \end{smallmatrix} \right) )
    \, dX'_1 \, dX'_3 \, dX'_2
    \ e(i \N v'^{-1}[y']) (\det v')^{\frac{k}{2}} 
    \right. \\
    &&
    \left.
    \times
    \det(v)^{-\frac{k}{2}} e(-i\M v^{-1}[y])
    \right| \\
    &< &
    C_\phi\, e(-i \N T')\, e(i \M T)
        \ e(i \N v'^{-1}[y']) \det(v')^{\frac{k}{2}} 
    \det(v)^{-\frac{k}{2}} e(-i \M v^{-1}[y]) \\
    &=&
    C_\phi\, e(-i \N(T' -  v'^{-1}[y'])) \, e(i \M(T - v^{-1}[y]))     
    \det(v')^{\frac{k}{2}} \det(v)^{-\frac{k}{2}} .
\end{eqnarray*}
We now write
$
\displaystyle{
  \N := \begin{pmatrix} N_2 & \frac12 R_2 \\ \frac12 {^t R_2} & \M \end{pmatrix}
}
$ and put
\begin{eqnarray*}
  \Delta := v_2 - v'^{-1}[y'_1],
  \quad
  \eta := y_2 - {^t y'_1} v'^{-1} y'_2.
\end{eqnarray*}
Then, by a straightforward calculation we have
\begin{eqnarray*}
 \mbox{Tr}(\N(T' - v'^{-1}[y'])) - \mbox{Tr}(\M(T-v^{-1}[y])) 
 &=&
 \mbox{Tr}\left( \N \begin{pmatrix} \Delta & \eta \\ {^t \eta} & \Delta^{-1}[\eta] \end{pmatrix} \right).
\end{eqnarray*}
Since
\begin{eqnarray*}
  \N &=&
  \begin{pmatrix} 
    N_2 - \frac14 \M^{-1}[ ^t R_2] & 0 \\ 0 & \M
  \end{pmatrix}
  \left[
    \begin{pmatrix}
      1_t & 0 \\
      \frac12 \M^{-1} {^t R_2} & 1_r
    \end{pmatrix}
  \right]
\end{eqnarray*}
and since
\begin{eqnarray*}
  \begin{pmatrix}
    \Delta & \eta \\ ^t \eta & \Delta^{-1}[\eta]
  \end{pmatrix}
  &=&
  \begin{pmatrix} 
    \Delta & 0 \\ 0 & 0
  \end{pmatrix}
  \left[
    \begin{pmatrix}
      1_t & \Delta^{-1} \eta \\
      0 & 1_r
    \end{pmatrix}
  \right]  ,
\end{eqnarray*}
there exists $\begin{pmatrix} v_2 & y_2 \end{pmatrix} \in \mbox{Sym}_t^+ \times \R^{(t,r)}$
which satisfies
$
\displaystyle{
\N \begin{pmatrix} \Delta & \eta \\ {^t \eta} & \Delta^{-1}[\eta] \end{pmatrix} 
=
\begin{pmatrix}
  1_t & 0 \\ 0 & 0
\end{pmatrix}
}
$.
By a straightforward calculation, such $\begin{pmatrix} v_2 & y_2 \end{pmatrix}$ is given by
\begin{eqnarray*}
v_2 &=& v'^{-1}[y'_1] + (N_2 - \frac14 \M^{-1}[^t R_2])^{-1}, \\
y_2 &=& - \frac12 (N_2 - \frac14 \M^{-1}[^t R_2])^{-1} \M^{-1} {^t R_2}
+ {^t y'_1} v'^{-1} y'_2.
\end{eqnarray*}
With this $\begin{pmatrix} v_2 & y_2 \end{pmatrix}$ we have
\begin{eqnarray*}
 &&
  |\phi_\N(\tau',z') e(i \N v'^{-1}[y']) (\det v')^{\frac{k}{2}} | \\
 &<&
    C_\phi\, e(-i t)
     \det(v')^{\frac{k}{2}} \det(v)^{-\frac{k}{2}} \\
  &=&
    C_\phi\, e(-i t)
     \det(v_2 - v'^{-1}[y'_1])^{-\frac{k}{2}}  \\
  &=&
    C_\phi\, e(-i t)
     \det(N_2 - \frac14 \M^{-1}[^t R_2])^{\frac{k}{2}} \\
  &=&
    C_\phi\, e(-i t)
     \det(\M)^{-\frac{k}{2}} \det(\N)^{\frac{k}{2}}   .
\end{eqnarray*}
Hence we conclude this lemma.
\end{proof}

Let $\phi_{\M}$ and $\psi_{\M}$ be Jacobi cusp forms of weight $k$ of index $\M$
on $\Gamma_n$.
Let $D_t(\phi_{\M}, \psi_{\M} ; s)$ be the Dirichlet series defined
in $(\ref{def:dirichlet})$.
\begin{lemma}\label{lem:Diri_conv}
The Dirichlet series $D_t(\phi_{\M}, \psi_{\M} ; s)$
converges absolutely with sufficient large $Re(s)$.
\end{lemma}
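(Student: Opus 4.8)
The plan is to reduce the absolute convergence of $D_t(\phi_\M,\psi_\M;s)$ to the absolute convergence of a Koecher--Maass type series, using Lemma~\ref{lem:FJc_norm} to bound the numerators $\langle \phi_\N,\psi_\N\rangle$ polynomially in $\det(\N)$. First I would write out the Petersson inner product
\begin{eqnarray*}
  \langle \phi_\N,\psi_\N\rangle
  &=&
  \int_{\mathcal F_{n-t,t+r}}
  \phi_\N(\tau',z')\,\overline{\psi_\N(\tau',z')}\,
  e^{-4\pi\,\mathrm{Tr}(\N v'^{-1}[y'])}\,
  \det(v')^{k-(n-t)-(t+r)-1}\,du'\,dv'\,dx'\,dy',
\end{eqnarray*}
and apply the estimate of Lemma~\ref{lem:FJc_norm} to both $\phi_\N$ and $\psi_\N$. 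Each contributes a factor $C'\det(\N)^{k/2}$ together with $e(i\N v'^{-1}[y'])(\det v')^{k/2}$, so that the integrand is bounded in absolute value by
\begin{eqnarray*}
  C'^2\,\det(\N)^{k}\,
  e^{-2\pi\,\mathrm{Tr}(\N v'^{-1}[y'])}\,
  \det(v')^{-n-r-1}.
\end{eqnarray*}
The key point is that the remaining integral $\int_{\mathcal F_{n-t,t+r}} e^{-2\pi\,\mathrm{Tr}(\N v'^{-1}[y'])}\det(v')^{-n-r-1}\,du'\,dv'\,dx'\,dy'$ is, up to the volume of the compact $u'$-directions, dominated by an integral over a Siegel domain which converges and is bounded by a constant independent of $\N$ (since $\N$ is positive definite and the exponential decay only helps); this gives $|\langle\phi_\N,\psi_\N\rangle|\le C''\det(\N)^{k}$ with $C''$ independent of $\N$. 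In the degenerate case $t=n$ the same bound holds directly from Lemma~\ref{lem:FJc_norm} with $v'$ of size $0$.

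Next I would feed this into the definition \eqref{def:dirichlet}. Dropping the factor $|\epsilon_{t,r}(\N)|^{-1}\le 1$ and using $|\langle\phi_\N,\psi_\N\rangle|\le C''\det(\N)^k$, we get for $\sigma=\mathrm{Re}(s)$
\begin{eqnarray*}
  \sum_{\N\in B_{t,r}(\Z)\backslash L_{t,r}^+(\M)}
  \frac{|\langle\phi_\N,\psi_\N\rangle|}{|\epsilon_{t,r}(\N)|\,|\det(\N)^s|}
  &\le&
  C''\sum_{\N\in B_{t,r}(\Z)\backslash L_{t,r}^+(\M)}
  \det(\N)^{k-\sigma}.
\end{eqnarray*}
So it remains to show that $\sum_{\N\in B_{t,r}(\Z)\backslash L_{t,r}^+(\M)}\det(\N)^{-\alpha}$ converges for $\alpha=\sigma-k$ large. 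Writing $\N=\smat{N}{\frac12 R}{\frac12\,{^tR}}{\M}$, one has $\det(\N)=\det(\M)\det(N-\tfrac14 R\M^{-1}\,{^tR})$, and $B_{t,r}(\Z)=GL_t(\Z)\ltimes\Z^{(t,r)}$ acts by $N-\tfrac14 R\M^{-1}\,{^tR}\mapsto \gamma_1(N-\tfrac14 R\M^{-1}\,{^tR})\,{^t\gamma_1}$ with $\gamma_1\in GL_t(\Z)$ on the $GL_t(\Z)$-part and by integral translations $R\mapsto R+2\M\gamma_2^{\,*}$-type shifts in the $\gamma_2$-direction. Thus the sum is majorized by a constant times $\sum_{S}\det(S)^{-\alpha}$ over $GL_t(\Z)$-classes of positive-definite $S$ in an appropriate (shifted) lattice, which is the classical Koecher--Maass/Epstein-type series; this converges absolutely for $\alpha>t$, i.e.\ for $\mathrm{Re}(s)>k+t$. (One may quote the convergence of $\sum_{S>0,\ S\in\Lambda/GL_t(\Z)}\det(S)^{-\alpha}$ for $\mathrm{Re}(\alpha)$ large, which is standard.)

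The main obstacle is the $\N$-uniformity of the bound on $\langle\phi_\N,\psi_\N\rangle$: one must be careful that after applying Lemma~\ref{lem:FJc_norm} the leftover integral over the fundamental domain $\mathcal F_{n-t,t+r}$ is genuinely bounded independently of $\N$. This is where positivity of $\N$ is used — the factor $e^{-2\pi\mathrm{Tr}(\N v'^{-1}[y'])}$ is $\le 1$ and does not spoil convergence, so the integral is bounded by $\int_{\mathcal F_{n-t,t+r}}\det(v')^{-n-r-1}\,du'\,dv'\,dx'\,dy'$, which is finite (it is, up to normalization, essentially the volume of a Siegel domain with a convergent power of $\det v'$; note $-n-r-1<0$). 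Once this uniform polynomial bound is in hand, the reduction to a Koecher--Maass series and its known convergence finish the proof.
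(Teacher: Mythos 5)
Your argument is correct and follows essentially the same route as the paper: bound $|\langle\phi_\N,\psi_\N\rangle|\le C\det(\N)^k$ uniformly via Lemma~\ref{lem:FJc_norm}, then reduce $\sum_{[\N]}\det(\N)^{-\alpha}$ through the Schur complement $N_2-\tfrac14\M^{-1}[{}^tR_2]$ and the finiteness of the fibres over the $R_2$-residues to a Koecher--Maass-type series. The only difference is cosmetic: the paper clears denominators with an integer $l$ and passes through Shintani's weighted series together with the polynomial bound $|\epsilon(\N')|<\det(\N')^c$, where you instead quote the (equally standard) convergence of the unweighted class sum over a shifted lattice; your intermediate bound also carries a spurious extra factor $e^{-2\pi\,\mathrm{Tr}(\N v'^{-1}[y'])}$, but since you discard it immediately this does not affect the conclusion.
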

\begin{proof}
By virtue of Lemma~\ref{lem:FJc_norm} we have
\begin{eqnarray*}
  \langle \phi_\N, \psi_\N \rangle
  < C \det(\N)^{k}
\end{eqnarray*}
with a certain positive number $C$ which does not depend on the choice of $\N$.
Hence, it is enough to show that the series
\begin{eqnarray*}
   \sum_{\mathcal{N} 
    \in B_{t,r}(\Z) \backslash L_{t,r}^+(\M) 
    } \frac{1}{
    |\epsilon_{t,r}(\mathcal{N})| \det(\mathcal{N})^{s}}.
\end{eqnarray*}
converges absolutely for sufficiently large $Re(s)$.

It is known that the series
\begin{eqnarray*}
  \sum_{\T \in GL(t,\Z) \backslash L_{t}^+} \frac{1}{|\epsilon(\T)| \det(\T)^s}
\end{eqnarray*}
is absolutely convergent for $Re(s) > \frac{t+1}{2}$ (cf. \cite{Shintani}).

There exists a natural map
$B_{t,r}(\Z) \backslash L_{t,r}^+(\M)  \rightarrow GL(t,\Z) \backslash L_{t}^+$
given by
\begin{eqnarray*}
\N = 
\begin{pmatrix}
  N_2 & \frac12 R_2 \\
  ^t \frac12 R_2 & \M
\end{pmatrix}
 \rightarrow 4 l N_2 - l \M^{-1}[^t R_2],
\end{eqnarray*}
where $2l$ is the smallest positive integer which satisfies
$2l (2\M)^{-1}\Z^{(r,1)} \subset \Z^{(r,1)}$.
This map may not be injective, but if two matrices
$\N_i = \begin{pmatrix} N_{2,i} & \frac11 R_{2,i} \\ \frac12 ^t R_{2,i} & \M \end{pmatrix} \in L_{t,r}^+(\M)$ $(i=1,2)$ satisfy the conditions
\begin{eqnarray*}
  4 N_{2,1} - \M^{-1}[^t R_{2,1}]
  &=&
    4 N_{2,2} - \M^{-1}[^t R_{2,2}]
\end{eqnarray*}
and $R_{2,1} - R_{2,2} \in \Z^{(t,r)} (2\M)$, then $\N_1$ and $\N_2$ belong to the same
equivalent class in $B_{t,r}(\Z) \backslash L_{t,r}^+(\M) $.
Therefore, for a fixed representative $\T$ in $GL(t,\Z) \backslash L_{t}^+$,
there exist at most $\det(2\M)^t$ representatives in $B_{t,r}(\Z) \backslash L_{t,r}^+(\M) $
which map to $\T$.

For $\N = 
\begin{pmatrix}
  N_2 & \frac12 R_2 \\
  ^t \frac12 R_2 & \M
\end{pmatrix}
$, we remark the identity
\begin{eqnarray*}
  \det(\N)
  &=&
  (4l)^{-t} \det(\M) \det(4l N_2 - l \M^{-1}[^t R_2]).
\end{eqnarray*}
For any real number $s$, we have
\begin{eqnarray*}
\frac{1}{
    |\epsilon_{t,r}(\mathcal{N})| \det(\mathcal{N})^{s}}
  &\leq&
  \frac{1}{\det(\N)^s}   
  \, = \, 
  (4 l)^{ts} \det(\M)^{-s}
  \frac{|\epsilon(\N')|}{|\epsilon(\N')| \det(\N')^s},
\end{eqnarray*}
where we put $\N' = 4lN_2 - l \M^{-1}[^t R_2]$.

It is not difficult to show that there exists a constant $c$ such that
$|\epsilon(\N')| < \det(\N')^c$ for any $\N' \in L_t^+$.
Therefore, for sufficiently large real number $s$, we have
\begin{eqnarray*}
   \sum_{\mathcal{N} 
    \in B_{t,r}(\Z) \backslash L_{t,r}^+(\M) 
    } \frac{1}{
    |\epsilon_{t,r}(\mathcal{N})| \det(\mathcal{N})^{s}}
 < 
  (4l)^{ts}
  2^{rt}
   \det(\M)^{-s+t}
    \sum_{\N' \in GL(t,\Z) \backslash L_{t}^+} \frac{1}{|\epsilon(\N')| \det(\N')^{s-c}}.
\end{eqnarray*}
Thus we conclude this lemma.
\end{proof}

For $0 \leq t \leq n$, we put
\begin{eqnarray*}
  P_{n-t,t} := \left\{  \begin{pmatrix} * & * \\ 0^{(t,2n-t)} & * \end{pmatrix} \in \Gamma_n \right\}
  = \left\{ \left(\begin{smallmatrix} * & 0^{(n-t, t)}  & * & * \\
                                                    * & * & * & * \\
                                                    * & 0^{(n-t,t)} & * & * \\
                                                    0^{(t,n-t)}& 0^{(t,t)}  & 0^{(t,n-t)} & * \end{smallmatrix}\right) \in \Gamma_n \right\}.
\end{eqnarray*}
For $\tau \in \H_n$ and for $s \in \C$, we set
\begin{eqnarray*}
   E_t^{(n)}(s;\tau)
 &:=&
   \sum_{\gamma \in P_{n-t,t} \backslash \Gamma_n} \left(\frac{\det(\mbox{Im}(\gamma \cdot \tau))}{\det(\mbox{Im}((\gamma \cdot \tau)_1))}\right)^s,
\end{eqnarray*}
where $(\gamma \cdot \tau)_1$ is the left upper part of $\gamma \cdot \tau$ of size $(n-t) \times (n-t)$.
The series
$E_t^{(n)}(s;\tau)$
converges absolutely for $\mbox{Re}(s) > n - \frac{t-1}{2}$ (see \cite[p.42-43]{Ya3}).

For $R \in \Z^{(n,r)}$ we put
\begin{eqnarray}\label{id:vartheta_2}
  \vartheta_{\M,R}(\tau,z)
  &:=&
  \sum_{\begin{smallmatrix} p \in \Z^{(n,r)} \\ p \equiv R \!\! \mod \Z^{(n,r)}(2\M) \end{smallmatrix}}
  e\left(\frac14 p \M^{-1} {^t p} \tau + p {^t z} \right).
\end{eqnarray}
We remark that $\vartheta_{\M,R}$ is defined for $R$ modulo $\Z^{(n,r)} (2\M)$.

We take the following decompositions with theta series:
\begin{equation}\label{id:f_R_g_R}
\begin{split}
  \phi_{\M}(\tau,z)
  &=
  \sum_{R \!\! \mod \Z^{(n,r)} (2 \M) }
  f_R(\tau)  \, \vartheta_{\M,R}(\tau,z), \\
  \psi_{\M}(\tau,z)
  &=
  \sum_{R \!\! \mod \Z^{(n,r)} (2 \M) }
  g_R(\tau)  \, \vartheta_{\M,R}(\tau,z).
\end{split}
\end{equation}
We call it theta decomposition.
We also take the Fourier expansions of $\phi_\M$ and $\psi_\M$:
\begin{eqnarray*}
 \phi_\M(\tau,z) &=& \sum_{N,R} C_\phi(N,R) e(N \tau + R {^t z}), \\
 \psi_\M(\tau,z) &=& \sum_{N,R} C_\psi(N,R) e(N \tau + R {^t z}),
\end{eqnarray*}
where in the above summations $N \in L_n^+$ and $R \in \Z^{(n,r)}$
run over matrices which satisfy $4 N - \M^{-1}[^t R] > 0$.
Then we have
\begin{eqnarray*}
  f_R(\tau) &=& \sum_{N} C_\phi(N,R)\, e\!\left(\frac{1}{4} (4 N - \M^{-1}[^t R]) \tau\right), \\
  g_R(\tau) &=& \sum_{N} C_\psi(N,R)\, e\!\left(\frac{1}{4} (4 N - \M^{-1}[^t R]) \tau\right),
\end{eqnarray*}
where in the above summations $N \in L_n^+$ runs over matrices which satisfy the condition
$4N - \M^{-1}[^t R] > 0$.

\begin{prop}\label{prop:int_D}
  We have an integral expression of $D_t(\phi_{\M},\psi_{\M}; s)$ as follows.
  If $Re(s)$ is sufficiently large, then we obtain
 \begin{eqnarray*}
   &&
   (1+\delta_{t,n})^{-1} \pi^{-\frac14 t(t-1) + t(s + k - n + \frac{t - r - 1}{2})}
   \prod_{j=1}^t 
   \Gamma\!\left( s+k -n + \frac{t - r - j}{2} \right)^{-1}
   \\
   &&
  \times \int_{\Gamma_{n} \backslash \H_{n}}
    \sum_{R \!\! \mod \Z^{(n,r)} (2 \M)} f_R(\tau) \overline{g_R(\tau)} (\det(\mbox{Im}(\tau)))^{k-\frac{r}{2}}
    E_t^{(n)}(s;\tau) \, d\tau \\
  &=&
    \det(2\M)^{\frac{n-t}{2} + s+k-n+\frac{t-r-1}{2}}
    2^{\frac{(n-t)r}{2} - (2t+r)(s+k - n +\frac{t - r - 1}{2})}
  \\
  &&
    \times
    D_t(\phi_{\M},\psi_{\M}; s + k - n  + (t-r-1)/2).
 \end{eqnarray*}  
\end{prop}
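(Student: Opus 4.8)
The plan is to compute the integral on the left-hand side by the standard Rankin--Selberg unfolding, exploiting the definition of the Eisenstein series $E_t^{(n)}(s;\tau)$ as a sum over $P_{n-t,t}\backslash\Gamma_n$. First I would substitute the definition of $E_t^{(n)}(s;\tau)$ into the integral $\int_{\Gamma_n\backslash\H_n}\bigl(\sum_R f_R(\tau)\overline{g_R(\tau)}\bigr)\det(\mathrm{Im}\,\tau)^{k-r/2}E_t^{(n)}(s;\tau)\,d\tau$ and unfold: since the combination $\sum_R f_R(\tau)\overline{g_R(\tau)}\det(v)^{k-r/2}$ is $\Gamma_n$-invariant (this follows from the fact that $\phi_\M,\psi_\M$ transform the same way and the theta decomposition \eqref{id:f_R_g_R} is compatible with the slash action, so the Petersson density is invariant), the integral becomes $\int_{P_{n-t,t}\backslash\H_n}\bigl(\sum_R f_R\overline{g_R}\bigr)\bigl(\det v/\det v_1\bigr)^s\det(v)^{k-r/2}\,d\tau$, where $v_1$ is the top-left $(n-t)\times(n-t)$ block of $v=\mathrm{Im}(\tau)$.

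Next I would parametrize $P_{n-t,t}\backslash\H_n$ using the Jacobi-form/Fourier--Jacobi structure. Writing $\tau=\smat{\tau'}{z'_1}{{}^tz'_1}{\tau_2}$ with $\tau'\in\H_{n-t}$, $\tau_2\in\H_t$ and the off-diagonal block $z'_1$, the quotient by the unipotent and Levi parts of $P_{n-t,t}$ leaves $\tau'$ running over $\Gamma_{n-t}\backslash\H_{n-t}$, the block $\tau_2$ (together with $z'_1$) running over a fundamental domain for the appropriate Siegel/Jacobi group of degree $t$, and the translation parts integrated over compact tori. The key input is the Fourier--Jacobi expansion \eqref{id:theta_decom_phi_psi}: expanding $\phi_\M,\psi_\M$ along the degenerating direction produces $\sum_{\N\in L_{t,r}^+(\M)}\phi_\N\overline{\psi_\N}$, and the torus integrations in the translation variables isolate the diagonal $\N=\N$ terms. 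After collapsing the $\N$-sum modulo $B_{t,r}(\Z)$ (the Levi action $\gamma_1\in GL_t(\Z)$ on $N$-block together with the $\gamma_2$-shifts on $R$) one obtains, for each class $\N$, a factor $\langle\phi_\N,\psi_\N\rangle/|\epsilon_{t,r}(\N)|$ from the volume of the stabilizer; the remaining $\tau_2$-integral against $(\det v/\det v_1)^s\det(v)^{k-r/2}e^{-4\pi\mathrm{Tr}(\cdots)}$ is a matrix-argument gamma integral over $\mathrm{Sym}_t^+$.

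Then I would evaluate that gamma integral. Using the block decomposition $\det v=\det v_1\cdot\det(v_2-v_1^{-1}[\cdots])$ and the substitution already appearing in the proof of Lemma~\ref{lem:FJc_norm} (which converts $\mathrm{Tr}(\N(T'-v'^{-1}[y']))-\mathrm{Tr}(\M(T-v^{-1}[y]))$ into $\mathrm{Tr}$ against a standard block form), the integral reduces to $\int_{\mathrm{Sym}_t^+}(\det w)^{s+k-n+(t-r-1)/2}e^{-4\pi\mathrm{Tr}(w)}\,dw$ up to a power of $\det(\N)$ and $\det(2\M)$. This is the Siegel gamma integral $\Gamma_t(\alpha)=\pi^{t(t-1)/4}\prod_{j=1}^t\Gamma(\alpha-(j-1)/2)$, which produces exactly the gamma-factor product and the power of $\pi$ displayed on the left; the powers of $2$ and $\det(2\M)$ come from tracking the lattice $\Z^{(n,r)}(2\M)$ in the theta series \eqref{id:vartheta_2} and the Jacobian of the substitution. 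Matching the exponent of $\det(\N)$ against $\det(\N)^{-(s+k-n+(t-r-1)/2)}$ gives the shift $s\mapsto s+k-n+(t-r-1)/2$ in $D_t$, and the factor $(1+\delta_{t,n})^{-1}$ accounts for the extra $\frac12$ in $\langle\phi_\N,\psi_\N\rangle$ when $t=n$ (Fourier coefficients, where the stabilizer count differs by a factor $2$).

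The main obstacle is the bookkeeping in the unfolding step: one must verify carefully that the torus integrations over the translation variables in the Levi/unipotent radical of $P_{n-t,t}$ exactly implement the Fourier--Jacobi expansion and kill all off-diagonal $(\N_1,\N_2)$ cross terms, and that the residual discrete group acting on the remaining $\tau_2$-variable is precisely $B_{t,r}(\Z)$ acting on $L_{t,r}^+(\M)$ with stabilizers $\epsilon_{t,r}(\N)$ — including the correct normalization of Haar measures so that no spurious constant appears. This is where the compatibility between the theta decomposition and the Fourier--Jacobi expansion, emphasized in the introduction, is essential. The gamma-integral evaluation and the power-of-$2$/$\det(2\M)$ tracking are then routine though tedious.
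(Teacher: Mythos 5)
Your proposal is correct and follows essentially the same route as the paper: unfold $E_t^{(n)}$ over $P_{n-t,t}\backslash\Gamma_n$, Fourier--Jacobi expand the theta components $f_R,g_R$ in the $\tau_2$-block, collapse the $(N_2,R_2)$-sum into $B_{t,r}(\Z)\backslash L_{t,r}^+(\M)$ with stabilizers $\epsilon_{t,r}(\N)$, and evaluate the Siegel gamma integral over $\mathrm{Sym}_t^+(\R)$. The only ingredient your sketch leaves implicit is the orthogonality relation for the theta series $\vartheta_{\M,R_2,R_1}$ (the paper's Lemma~\ref{lem:theta_orth_2}), which is what actually identifies the residual $(\tau_1,z_1')$-integral of $\sum_{R_1}f_{R,N_2}\overline{g_{R,N_2}}$ with $\langle\phi_\N,\psi_\N\rangle$ and produces the $\det(2\M)$- and $2$-power constants; the paper organizes this as two separate integral identities (for $I_t$ and for $D_t$) that are then compared.
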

We will show this proposition in \S\ref{s:proof_of_prop_int_D}.

We put
 $\displaystyle{\xi(s) := \pi^{-\frac{s}{2}} \Gamma\!\left(\frac{s}{2}\right) \zeta(s)  }$
and set
\begin{eqnarray*}
\mathcal{E}_t^{(n)}(s ; \tau)
&:=&
\prod_{i=1}^t \xi(2s+1-i) \prod_{i=1}^{[t/2]}
\xi(4s-2n+2t-2i) E_t^{(n)}(s ; \tau) .
\end{eqnarray*}

The following theorem has been shown by Kalinin~\cite{kalinin} for $t = n$
and by Yamazaki~\cite{Ya3} for $1 \leq t < n$.
\begin{theorem}[\cite{kalinin}, \cite{Ya3}]\label{th:eisenstein_series}
The function $\mathcal{E}_t^{(n)}(s ; \tau)$ has a meromorphic continuation
to the whole complex plane as the function of $s$ and holomorphic for $Re(s) > (2n-t+1)/2$.
Moreover, $\mathcal{E}_t^{(n)}(s ; \tau)$ satisfies the functional equation
\begin{eqnarray*}
  \mathcal{E}_t^{(n)}(s ; \tau)
  &=&
  \mathcal{E}_t^{(n)}\left(\frac{2n-t+1}{2} - s ; \tau\right).
\end{eqnarray*}
It has a simple pole at $s = n - (t-1)/2$ with the residue
\begin{eqnarray*}
   \frac{1+\delta_{t,n}}{2} \prod_{j=2}^t \xi(j) \prod_{j=1}^{[t/2]} \xi(2n-2t+2j+1)
\end{eqnarray*}
when $n > 1$ and with the residue $1/2$ when $n=t=1$.
(cf.~\cite[Theorem 2]{kalinin} for $t = n > 1$, ~\cite[Theorem 2.2]{Ya3}
for $1 \leq t < n$).

Moreover, 
if $t = n$, then the function
$\displaystyle{ \xi(2s) \prod_{i=1}^{[n/2]} \xi(4s-2i) E_n^{(n)}(s ; \tau)}$
has a meromorphic
 continuation to the
whole complex plane in $s$ except the possible poles of finite order at $s = j/4$
for integers $j$ $(0 \leq j \leq 2n+2)$.

If $t = 1$, then $\mathcal{E}_1^{(n)}(s ; \tau)$ has a meromorphic continuation to the
whole complex plane in $s$ except the poles at $s = n$ and $0$ with
residues $\frac12$ and $-\frac12$, respectively.
\end{theorem}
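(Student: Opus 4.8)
The plan is to follow Kalinin~\cite{kalinin} in the case $t=n$ and Yamazaki~\cite{Ya3} in the case $1\le t<n$, the common engine being the Fourier expansion of $E_t^{(n)}(s;\tau)$ with respect to the unipotent radical of $P_{n-t,t}$. First I would record that the defining sum converges absolutely and locally uniformly for $\mathrm{Re}(s)>n-(t-1)/2$ (as already recalled) by a standard majorization of the Klingen-type series, so that in this range $E_t^{(n)}(s;\tau)$ is holomorphic in $s$ and, on a Siegel fundamental domain, of moderate growth uniformly for $s$ in compact sets; this legitimizes the term-by-term manipulations to follow. Next I would compute the Fourier expansion in the $\mathrm{Sym}_t(\Z)$-periodic variable $\mathrm{Re}(\tau_2)$, where $\tau=\left(\begin{smallmatrix}\tau_1 & z\\ {}^t z & \tau_2\end{smallmatrix}\right)$ with $\tau_1\in\H_{n-t}$ and $\tau_2\in\H_t$, followed by the analogous Fourier--Jacobi unfolding in $z$. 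Summing over coset representatives, which are parametrized by suitable integral matrices, one finds that each Fourier coefficient factors as the product of an arithmetic factor --- a Dirichlet series in $s$ assembled from $\zeta(s)$ and Siegel series (local representation densities) --- and an archimedean factor, namely a confluent hypergeometric function of matrix argument in the sense of Shimura.

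The key point is then that the normalizing product $\prod_{i=1}^{t}\xi(2s+1-i)\prod_{i=1}^{[t/2]}\xi(4s-2n+2t-2i)$ is exactly tailored to these coefficients: its zeta-factors cancel the denominators of the arithmetic factor, while the additional $\Gamma$-factors supplied by the duplication complete the archimedean factor. Granting the meromorphic continuation of $\zeta$ (Riemann), of the Siegel series (Kitaoka), and of the matrix-argument confluent hypergeometric function (Shimura), the uniform moderate growth then lets one continue $\mathcal{E}_t^{(n)}(s;\tau)$ meromorphically to all of $\C$ term by term. Holomorphy for $\mathrm{Re}(s)>(2n-t+1)/2$ is immediate: in that half-plane $E_t^{(n)}$ is itself holomorphic, and none of the $\xi$-factors has a pole there.

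For the functional equation I would combine $\xi(s)=\xi(1-s)$ with the local functional equations of the Siegel series and the functional equation of the confluent hypergeometric function, applied coefficient by coefficient; the resulting symmetry $s\mapsto(2n-t+1)/2-s$ is precisely the one dictated by the Klingen parabolic $P_{n-t,t}$, and it can alternatively be read off from Langlands' constant-term computation via the Gindikin--Karpelevich formula. The simple pole at $s=n-(t-1)/2$ --- the edge of the domain of convergence --- comes solely from the zero-frequency (constant) Fourier coefficient, whose arithmetic factor inherits the pole of $\zeta(s)$ at $s=1$; the residue of $E_t^{(n)}$ there is independent of $\tau$, and multiplying it by the normalizing $\xi$-product evaluated at $s=n-(t-1)/2$, using $\xi(s)=\xi(1-s)$ to bring the arguments into the displayed form, yields the stated residue $\frac{1+\delta_{t,n}}{2}\prod_{j=2}^{t}\xi(j)\prod_{j=1}^{[t/2]}\xi(2n-2t+2j+1)$, which degenerates to $1/2$ when $n=t=1$.

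Finally I would handle the two extreme cases. For $t=n$ the arithmetic factor is the full Siegel series for $\mathrm{Sp}(n)$, whose completed form is $\xi(2s)\prod_{i=1}^{[n/2]}\xi(4s-2i)$ times an entire function; hence $\xi(2s)\prod_{i=1}^{[n/2]}\xi(4s-2i)\,E_n^{(n)}(s;\tau)$ has at worst the listed poles of finite order at $s=j/4$ for $0\le j\le 2n+2$. For $t=1$ the series is of Epstein type and $\mathcal{E}_1^{(n)}(s;\tau)=\xi(2s)\,E_1^{(n)}(s;\tau)$, with poles only at $s=n$ (edge of convergence, residue $\frac12$ by the computation above) and, via the functional equation $s\mapsto n-s$, at $s=0$, where the residue is $-\frac12$ --- visible directly from the pole of $\xi(2s)$ at $s=0$ together with $E_1^{(n)}(0;\tau)=1$. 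The heart of the matter, and the main obstacle, is the precise identification and analytic control of the arithmetic factor --- the Siegel series, its denominators and local functional equations --- and of the matrix-argument confluent hypergeometric calculus; these are the computations carried out in detail in \cite{kalinin} and \cite{Ya3}, which I would invoke for the delicate points.
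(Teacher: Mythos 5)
This theorem is not proved in the paper at all: it is imported verbatim from Kalinin (for $t=n$) and Yamazaki (for $1\le t<n$), with the paper offering only the citations \cite[Theorem 2]{kalinin} and \cite[Theorem 2.2]{Ya3}. Your outline is a faithful reconstruction of the method used in those references --- unfolding along the Klingen parabolic $P_{n-t,t}$, factoring each Fourier coefficient into a Siegel-series (arithmetic) part and a confluent hypergeometric (archimedean) part, and observing that the normalizing factor $\prod_{i=1}^{t}\xi(2s+1-i)\prod_{i=1}^{[t/2]}\xi(4s-2n+2t-2i)$ completes both --- and your bookkeeping checks out where it can be checked against the statement: $(2n-t+1)/2=n-(t-1)/2$ is indeed the edge of absolute convergence, $\mathcal{E}_1^{(n)}=\xi(2s)E_1^{(n)}$, the functional equation $s\mapsto n-s$ for $t=1$ carries the pole at $s=n$ (residue $\tfrac12$) to the pole at $s=0$ (residue $-\tfrac12$), and the constant term is the only source of the pole at $s=n-(t-1)/2$. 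That said, as a standalone proof your text is a scaffold rather than an argument: the genuinely hard content --- the explicit local functional equations of the Siegel series, the meromorphic continuation and functional equation of the matrix-argument confluent hypergeometric function, and the verification that the displayed residue constant is exactly $\frac{1+\delta_{t,n}}{2}\prod_{j=2}^{t}\xi(j)\prod_{j=1}^{[t/2]}\xi(2n-2t+2j+1)$ --- is deferred to \cite{kalinin} and \cite{Ya3}, which is precisely what the paper itself does. So you are not in conflict with the paper; you have simply expanded its citation into the standard proof sketch, and completing it would amount to transcribing the cited computations.
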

It is remarked in~\cite{Ya3} that if $t  \geq 2n - 2t + 2$, then
we can simplify the gamma factor of $\mathcal{E}_t^{(n)}$
by virtue of the cancellation of the above functional equation.
It means that it is possible to take
$\displaystyle{\prod_{i=1}^{2n-2t+1} \xi(2s+1-i) \prod_{i=1}^{[t/2]}
\xi(4s-2n+2t-2i) E_t^{(n)}(s ; \tau)}$
as the choice of the definition of $\mathcal{E}_t^{(n)}$ in this case.
The residue of $\mathcal{E}_t^{(n)}$ in the theorem will be changed
if we change the gamma factor.

We put
\begin{eqnarray*}
 &&
 \mathcal{D}_{t}(\phi_\M, \psi_\M ; s) \\
 &:=&
 (4\pi)^{-ts} (\det \M)^s \prod_{j=1}^t \left(\Gamma\left(s - \frac{j-1}{2}\right) \xi(2s-2k+2n+r+2-t-j)\right) \\
 &&
 \times \prod_{j=1}^{[t/2]} \xi(4s-4k+2n+2r+2-2j) \\
 &&
 \times D_{t}(\phi_\M, \psi_\M ; s) .
\end{eqnarray*}
We remark that if $r = 0$, then we regard $\det(\M)$ as $1$.

By virtue of Proposition~\ref{prop:int_D}
the function $\mathcal{D}_{t}(\phi_\M, \psi_\M ; s) $ equals to
\begin{eqnarray*}
&&
\int_{\Gamma_{n} \backslash \H_{n}}
    \sum_{R \!\! \mod \Z^{(n,r)} (2 \M)} f_R(\tau) \overline{g_R(\tau)} (\det(\mbox{Im} \tau))^{k-\frac{r}{2}}
    \mathcal{E}_t^{(n)}(s-k+n-(t-r-1)/2 ; \tau) \, d\tau
\end{eqnarray*}
times the constant $\pi^{-\frac14t(t-1)} \det(4\M)^{-\frac{n-t}{2}} (1 + \delta_{t,n})^{-1}$.

Thus, due to Theorem~\ref{th:eisenstein_series} we have the following.

\begin{theorem}\label{th:D_t_M}
The function $\mathcal{D}_t(\phi_{\M}, \psi_{\M} ; s)$
has a meromorphic continuation to the whole complex plane
and holomorphic for $Re(s) > k - \dfrac{r}{2}$.
It has a simple pole at $s = k - \dfrac{r}{2}$ with the residue
\begin{eqnarray*}
  (1+ \delta_{0,r})^{-1} \pi^{-\frac14 t (t-1)} \det(4\M)^{\frac{t}{2}} \langle \phi_\M, \psi_\M \rangle
  \prod_{j=2}^t \xi(j) \prod_{j=1}^{[t/2]} \xi(2n-2t+2j+1)
\end{eqnarray*}
when $n > 1$ and with the residue $\frac12 (1+\delta_{0,r})^{-1}  \det(4\M)^{\frac12} \langle \phi_\M, \psi_\M \rangle$
when $n = t = 1$.

It satisfies a functional equation
\begin{eqnarray*}
  \mathcal{D}_t(\phi_\M, \psi_\M ; s)
  &=&
  \mathcal{D}_t\!\left(\phi_\M, \psi_\M ; 2k-n-r+\frac{t-1}{2}-s\right).
\end{eqnarray*}

  Moreover, if $t = 1$, then
  $\mathcal{D}_1(\phi_\M,\psi_\M ; s)$ has a meromorphic continuation to
  the whole complex plane and holomorphic
  except for simple poles at $s = k -\dfrac{r}{2}$ and $s = k - \dfrac{r}{2} - n$.
  The residue at $s = k-\dfrac{r}{2}$ is 
  \begin{eqnarray*}
   (1 + \delta_{1,n})^{-1} (1 + \delta_{0,r})^{-1} \det(4 \M)^{\frac12}  \langle \phi_\M, \psi_\M \rangle.
  \end{eqnarray*}
The case $r = 0$ has been shown in~\cite{Ya3}.
\end{theorem}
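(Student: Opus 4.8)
The plan is to derive every assertion from the integral representation displayed just above together with Theorem~\ref{th:eisenstein_series}. Abbreviate $\sigma(s) := s - k + n - \frac{t-r-1}{2}$ and $\Phi(\tau) := \sum_{R \bmod \Z^{(n,r)}(2\M)} f_R(\tau)\overline{g_R(\tau)}\,(\det \mbox{Im}\,\tau)^{k-\frac{r}{2}}$, so that the identity recorded before the theorem reads $\mathcal{D}_t(\phi_\M,\psi_\M;s) = c_{n,t,\M}\,\int_{\Gamma_n\backslash\H_n}\Phi(\tau)\,\mathcal{E}_t^{(n)}(\sigma(s);\tau)\,d\tau$, where $c_{n,t,\M} := \pi^{-\frac14 t(t-1)}\det(4\M)^{-\frac{n-t}{2}}(1+\delta_{t,n})^{-1}$. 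All the analytic properties of $\mathcal{D}_t$ will be read off from this single formula and from the properties of $\mathcal{E}_t^{(n)}$.

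First I would observe that $\Phi$ is a $\Gamma_n$-invariant, rapidly decreasing function on $\Gamma_n\backslash\H_n$: the invariance is the transformation law of the vector-valued modular forms $(f_R)_R$, $(g_R)_R$ of weight $k-\frac r2$ attached to the theta decompositions~(\ref{id:f_R_g_R}), and the rapid decay follows from $\phi_\M$ and $\psi_\M$ being cusp forms (the estimates are of the same type as in Lemma~\ref{lem:FJc_norm}). By the continuation statement quoted in Theorem~\ref{th:eisenstein_series}, $\mathcal{E}_t^{(n)}(\sigma;\tau)$ is of moderate growth in $\tau$, locally uniformly in $\sigma$ off its poles; hence the integral converges and is holomorphic in $s$ wherever $\sigma(s)$ avoids those poles, so $\mathcal{D}_t$ is meromorphic on $\C$. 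Since $\mathcal{E}_t^{(n)}(\sigma;\tau)$ is holomorphic for $\mbox{Re}(\sigma) > (2n-t+1)/2$, which translates to $\mbox{Re}(s) > k - \frac r2$, and has at worst a simple pole at $\sigma = n - \frac{t-1}{2}$, which corresponds to $s = k - \frac r2$, we get that $\mathcal{D}_t$ is holomorphic for $\mbox{Re}(s) > k - \frac r2$ with at most a simple pole at $s = k - \frac r2$. Justifying the convergence of the integral against the continued Eisenstein series is the part that needs the most care; everything else is bookkeeping.

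For the residue I would use that the residue of $\mathcal{E}_t^{(n)}$ at $\sigma = n - \frac{t-1}{2}$ is a constant $\rho_{n,t}$ independent of $\tau$, namely $\frac{1+\delta_{t,n}}{2}\prod_{j=2}^t\xi(j)\prod_{j=1}^{[t/2]}\xi(2n-2t+2j+1)$ for $n>1$ and $\frac12$ for $n=t=1$. Then $\mathrm{Res}_{s=k-r/2}\,\mathcal{D}_t(\phi_\M,\psi_\M;s) = c_{n,t,\M}\,\rho_{n,t}\,\int_{\Gamma_n\backslash\H_n}\Phi(\tau)\,d\tau$, and the last ingredient is the unfolding identity $\int_{\Gamma_n\backslash\H_n}\Phi(\tau)\,d\tau = 2(1+\delta_{0,r})^{-1}\det(4\M)^{n/2}\langle\phi_\M,\psi_\M\rangle$, which is obtained by substituting the theta decomposition~(\ref{id:f_R_g_R}) into the definition of the Jacobi Petersson product and performing the Gaussian integration over the elliptic variables $z$, the off-diagonal terms disappearing by orthogonality of the $\vartheta_{\M,R}$. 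Plugging this in, and noting that the factor $1+\delta_{t,n}$ in $\rho_{n,t}$ cancels the one in $c_{n,t,\M}$, reproduces exactly the residue stated, both for $n>1$ and for $n=t=1$.

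Finally, the functional equation $\mathcal{E}_t^{(n)}(\sigma;\tau) = \mathcal{E}_t^{(n)}\!\left(\frac{2n-t+1}{2}-\sigma;\tau\right)$ becomes, under $\sigma = \sigma(s)$, the reflection $s \longleftrightarrow 2k - n - r + \frac{t-1}{2} - s$, which is precisely the functional equation claimed for $\mathcal{D}_t$; no archimedean or zeta factors need be chased since they have already been built into $\mathcal{E}_t^{(n)}$. For $t=1$ one instead invokes the sharper part of Theorem~\ref{th:eisenstein_series}: $\mathcal{E}_1^{(n)}$ continues to all of $\C$ with only the two simple poles at $\sigma = n$ and $\sigma = 0$, of residues $\frac12$ and $-\frac12$; translating by $\sigma(s)$, these are simple poles of $\mathcal{D}_1$ at $s = k - \frac r2$ and $s = k - \frac r2 - n$, and the residue at $s = k - \frac r2$ is computed exactly as above, yielding $(1+\delta_{1,n})^{-1}(1+\delta_{0,r})^{-1}\det(4\M)^{1/2}\langle\phi_\M,\psi_\M\rangle$. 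When $r=0$ there is no theta decomposition, $\Phi = \phi_\M\overline{\psi_\M}(\det\mbox{Im}\,\tau)^k$, and the argument specializes to Yamazaki's result~\cite{Ya3}. The one genuine obstacle is the moderate-growth estimate that makes the integral against the analytically continued Eisenstein series converge, together with pinning down the exact constant in the theta-unfolding identity.
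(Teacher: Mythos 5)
Your proposal is correct and follows essentially the same route as the paper: the paper derives Theorem~\ref{th:D_t_M} directly from the integral representation of Proposition~\ref{prop:int_D} combined with Theorem~\ref{th:eisenstein_series}, exactly as you do, and your unfolding constant $\int_{\Gamma_n\backslash\H_n}\Phi(\tau)\,d\tau = 2(1+\delta_{0,r})^{-1}\det(4\M)^{n/2}\langle\phi_\M,\psi_\M\rangle$ agrees with the one the paper uses implicitly (it is confirmed in the residue computation inside the proof of Lemma~\ref{lem:matrix_jacobi_petersson}). The bookkeeping of the substitution $\sigma(s)=s-k+n-\tfrac{t-r-1}{2}$, the cancellation of the $(1+\delta_{t,n})$ factors, and the $t=1$ specialization all check out.
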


We remark that if $n = r = t = 1$, then the above residue coincides with Proposition~1~(a) 
in~\cite{ImaMa}.
However, Proposition 1 (a) in~\cite{ImaMa}
 should read
\begin{eqnarray*}
\mbox{Res}_{s_2 = k - 1/2} D_{F,G}(s_1,s_2) = \pi^{k+\frac12}\, \Gamma\!\left(k-\frac12\right)^{-1}
\zeta(2)^{-1} L(F,G,s_1+k-1).
\end{eqnarray*}

After we shall explain some similar results of Theorem~\ref{th:D_t_M} for Jacobi cusp forms of \textit{half-integral weight}
in Section~\ref{s:plus_space_jacobi_forms},
we will prove Proposition~\ref{prop:int_D} in Section~\ref{s:proof_of_prop_int_D}.

%
\section{Rankin-Selberg method for the plus space of Jacobi forms}\label{s:plus_space_jacobi_forms}

In this section we shall explain the half-integral weight case.
In this section we assume that $k$ is an even integer.
We assume $r \geq 1$.
Let $\M = \begin{pmatrix} \M_1 & \frac12 L \\ \frac12 ^t L & 1  \end{pmatrix} \in L_r^+$
with $\M_1 \in L_{r-1}^+$ and $L \in M_{r-1,1}(\Z)$.
If $r \geq 2$, we set
\begin{eqnarray*}
  \mathfrak{M} := 4\M_1 - L {^t L}.
\end{eqnarray*}
If $r = 1$, then $\M = 1$ and
we regard $\mathfrak{M} = \emptyset $ as the empty set and we put $\det(\mathfrak{M}) = 1$
by abuse of notation.

We set $\Gamma_0^{(n)}(4) := \left. \left\{ \begin{pmatrix} A & B \\ C & D \end{pmatrix} \in \Gamma_n
\, \right| \, C \in 4 \Z^{(n,n)} \right\}$ .

Let $J_{k-\frac12,\mathfrak{M}}^{(n)+}$ be the plus-space of Jacobi forms
of weight $k-\frac12$ of index $\mathfrak{M}$ on $\Gamma_0^{(n)}(4)$
which is a generalization of generalized plus-space of Siegel modular forms
of weight $k-\frac12$ to Jacobi forms. The space $J_{k-\frac12,\mathfrak{M}}^{(n)+}$
is defined as follows.
Let $\phi$ be a Jacobi form of weight $k-\frac12$ of index $\mathfrak{M}$ on $\Gamma_0^{(n)}(4)$.
The reader is referred to~\cite{matrix_integer} for the precise definition
of Jacobi forms of half-integral weight.
We take the Fourier expansion
\begin{eqnarray*}
 \phi(\tau,z) &=&  \sum_{N',R'} C_\phi(N',R') e(N' \tau + R' {^t z})
\end{eqnarray*}
for $(\tau,z) \in \H_n \times \C^{(n,r-1)}$,
where $N'$ and $R'$ run over $L_n^*$ and $\Z^{(n,r-1)}$, respectively, such that
$4 N' - R' \mathfrak{M}^{-1} {^t R'} \geq 0$.
Then $\phi$ belongs to $J_{k-\frac12,\mathfrak{M}}^{(n)+}$ if and only if 
$C_\phi(N',R') = 0$ unless
\begin{eqnarray*}
  \begin{pmatrix}
    N' & \frac12 R' \\ \frac12 {^t R'} & \mathfrak{M}
  \end{pmatrix}
  \equiv \lambda {^t \lambda} \mod 4
\end{eqnarray*}
with some $\lambda \in \Z^{(n+r-1,1)}$.

If $r = 1$, then the space $J_{k-\frac12,\mathfrak{M}}^{(n)+}$ coincides with
the generalized plus-space of Siegel modular forms.
There exists a  linear isomorphism map $\iota_\M$ from $J_{k,\M}^{(n)}$
to $J_{k-\frac12,\mathfrak{M}}^{(n)+}$ (cf.~\cite{EZ} (for $r=n=1$), ~\cite{Ib} (for $r =1$, $n > 1$), ~\cite{matrix_integer} (for $r > 1$, $n \geq 1$)).
This map $\iota_\M$ is given as follows.

Let $\phi_\M \in J_{k,\M}^{(n)}$ be a Jacobi form.
We denote by $C_{\phi_\M}( * , * )$ the Fourier coefficients of $\phi_\M$.
For $\tau \in \H_n$ and for $z = (z_1,z_2) \in \C^{(n,r)}$ ($z_1 \in \C^{(n,r-1)}$, $z_2 \in \C^{(n,1)}$),
we take the theta decomposition
\begin{eqnarray*}
  \phi_\M(\tau,z)
  &=&
   \sum_{\begin{smallmatrix} R \in \Z^{(n,1)} \\ R \!\! \mod 2 \Z^{(n,1)} \end{smallmatrix}} f_{R,\M_1}(\tau,z_1) \vartheta_{1,L,R}(\tau,z_1,z_2),
\end{eqnarray*}
where
\begin{eqnarray*}
  f_{R,\M_1}(\tau,z_1)
  &=&
  \sum_{N_1 \in L_{n}^*, N_3 \in \Z^{(n,r-1)}}
  C_{\phi_\M}(N_1, \begin{pmatrix}  N_3 &  R  \end{pmatrix})
  \\
  && \times
  e( (N_1 - \frac14 R {^t R})\tau + (N_3 - \frac12 R {^t L}) {^t z_1})
\end{eqnarray*}
and the function $\vartheta_{1,L,R}$ will be denoted in (\ref{id:vartheta_3}) (cf. \cite[Lemma 4.1]{matrix_integer}).
We put
\begin{eqnarray*}
  \iota_\M(\phi_\M)(\tau,z_1)
  & = & 
  \sum_{R \in \Z^{(n,1)}/(2 \Z^{(n,1)})} f_{R,\M_1}(4\tau, 4 z_1).
\end{eqnarray*}
For the sake of simplicity we write  $\phi_\mathfrak{M} = \iota_\M(\phi_\M)$.
Then $\phi_\mathfrak{M}$ belongs to $J_{k-\frac12,\mathfrak{M}}^{(n)+}$
(cf.~\cite[Proposition 4.4]{matrix_integer}).
If $\phi_\M$ is a Jacobi cusp form, then $\phi_\mathfrak{M}$ is also a Jacobi cusp form.
If $r=1$, then $\phi_\mathfrak{M}$ is a Siegel modular form
(cf. \cite{EZ}, \cite{Ib}).

Let $\phi$ and $\psi$ be Jacobi cusp forms of weight $k-\frac12$ of index $S \in L_r^+$
on $\Gamma_0^{(n)}(4)$.
The Petersson inner product is defined by
\begin{eqnarray*}
 \langle \phi, \psi \rangle 
 &:=&
 \left[ \Gamma_n : \Gamma_0^{(n)}(4) \right]^{-1}
   \int_{\mathcal{F}_{n,r,4}}
    \phi(\tau,z) \overline{ \psi(\tau,z)} e^{-4\pi Tr( S v^{-1}[y])} \det(v)^{k - n - r - \frac32 } 
    \, du\, dv\, dx\, dy,
\end{eqnarray*}
where $\mathcal{F}_{n,r,4} :=
\Gamma_{n,r}^J(4) \backslash (\H_n \times \C^{(n,r)}) $,
$\tau = u + i v$, $z = x + i y$,
$du = \prod_{i \leq j} u_{i,j}$,  $d v = \prod_{i \leq j} v_{i,j}$,
$dx = \prod_{i,j} x_{i,j}$, $dy = \prod_{i,j} y_{i,j}$
and $\left[ \Gamma_n : \Gamma_0^{(n)}(4) \right]$ denotes the index of
$\Gamma_0^{(n)}(4)$ in $\Gamma_n$.
Here we put
\begin{eqnarray*}
  \Gamma_{n,r}^J(4)
&:=&
  \left. \left\{
  \begin{pmatrix}
    A & 0 & B  & * \\
    * & 1_r & * & * \\
    C & 0 & D & * \\
    0 & 0 & 0 & 1_r
  \end{pmatrix}
  \in \Gamma_{n+r}
  \, \right| \,
  \begin{pmatrix}
    A & B \\ C & D 
  \end{pmatrix}
  \in \Gamma_0^{(n)}(4)
  \right\} .
\end{eqnarray*}

\begin{lemma}\label{lem:matrix_jacobi_petersson}
Let $\phi_\M$ and $\psi_\M$ be Jacobi cusp forms in $J_{k,\M}^{(n)\, cusp}$.
We put $\phi_\mathfrak{M} = \iota_\M(\phi_\M)$ and $\psi_\mathfrak{M} = \iota_\M(\psi_\M)$.
As for the Petersson inner product we obtain the identity
 \begin{eqnarray*}
  \langle \phi_\M, \psi_\M \rangle
  &=&
  (1 + \delta_{1,r})^{-1} 2^{2n(k-1)}  \langle \phi_\mathfrak{M}, \psi_\mathfrak{M} \rangle.
 \end{eqnarray*}
We remark that, in the case of $r = 1$ and $n = 1$, this identity has been obtained
by combining Kohnen and Zagier~\cite[p.p. 189--191]{KoZa} and Eichler-Zagier~\cite[Theorem 5.3]{EZ}. Remark that the denominator of RHS of~\cite[Theorem 5.3]{EZ} is neither
$\sqrt{2m}$ nor $\sqrt{4m}$ but $4\sqrt{m}$. 
We remark that, in the case of $r =1 $and $n > 1$, the above identity has been obtained
by Katsurada and Kawamura~\cite[p. 2051 (6)]{KaKa:Dirichlet}.
Remark that $2^{(2k-2)(n-1)}$ in ~\cite[p. 2051 (6)]{KaKa:Dirichlet} should read
$2^{(2k-2)(n-1)-1}$.
\end{lemma}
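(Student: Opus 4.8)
The strategy is to reduce the identity to an explicit unfolding of the two Petersson integrals and a careful bookkeeping of the change of variables $(\tau,z)\mapsto(4\tau,4z)$ that is hidden in the definition of $\iota_\M$. First I would write out both sides in terms of the theta decompositions. Writing $\phi_\M(\tau,z)=\sum_{R} f_{R,\M_1}(\tau,z_1)\,\vartheta_{1,L,R}(\tau,z_1,z_2)$ and similarly for $\psi_\M$, the integrand $\phi_\M\overline{\psi_\M}\,e^{-4\pi\tr(\M v^{-1}[y])}$ splits, after integrating out the $z_2$-variable against the theta functions, into a finite sum over $R$ of terms $f_{R,\M_1}(\tau,z_1)\overline{g_{R,\M_1}(\tau,z_1)}$ times an explicit Gaussian/theta factor. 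The orthogonality of the $\vartheta_{1,L,R}$ over a fundamental domain for the $z_2$-lattice produces a determinant factor $\det(2\cdot 1)^{-1/2}=2^{-n/2}$ (with the index $r$ here contributing the single variable $z_2$), so that $\langle\phi_\M,\psi_\M\rangle$ becomes a constant times an integral over $\H_n\times\C^{(n,r-1)}$ of $\sum_R f_{R,\M_1}\overline{g_{R,\M_1}}$ against the weight-$(k-\frac12)$-type kernel with index $\mathfrak{M}$.

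\textbf{Key steps.} Second, I would perform the analogous unfolding for $\langle\phi_\mathfrak{M},\psi_\mathfrak{M}\rangle$. By definition $\phi_\mathfrak{M}(\tau,z_1)=\sum_R f_{R,\M_1}(4\tau,4z_1)$, so the substitution $\tau\mapsto \tau/4$, $z_1\mapsto z_1/4$ transports its Petersson integral back to one involving $f_{R,\M_1}(\tau,z_1)$ on a dilated domain. The Jacobian of this substitution on $\H_n\times\C^{(n,r-1)}$ contributes a power of $4$: there are $\binom{n+1}{2}$ real variables in $u$, the same in $v$, and $n(r-1)$ each in $x,y$, so altogether $4$ is raised to the power $-(n(n+1)+2n(r-1))/2 = -(n^2+2nr-n)/2$ times the appropriate shift coming from $\det(v)^{k-n-r-\frac32}$ and from $e^{-4\pi\tr(\mathfrak{M}v^{-1}[y])}$ under the rescaling; the cross-ratio of these $4$-powers, together with the index $[\Gamma_n:\Gamma_0^{(n)}(4)]^{-1}$ and the matching of fundamental domains $\mathcal F_{n,r-1,4}$ versus $\mathcal F_{n,r-1}$ (another factor of the index, which cancels), is what must collapse to the clean constant $2^{2n(k-1)}$. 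The factor $(1+\delta_{1,r})^{-1}$ will appear because when $r=1$ the matrix $\mathfrak M$ is empty and the "Jacobi" inner product degenerates to the Siegel one, in which the $z_2$-integration is the only theta step and a symmetry doubles the normalization; I would handle $r=1$ and $r\ge 2$ in parallel, noting that the half-integral plus-space argument of~\cite[Proposition 4.4]{matrix_integer} already records the compatibility of the theta decomposition with $\iota_\M$ that we need.

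\textbf{Main obstacle.} I expect the delicate point to be the exact exponent of $2$. Three sources of powers of $2$ must be combined without error: (i) the theta-orthogonality constant from integrating out $z_2$ (the $\det(2\M)^{\pm 1/2}$-type factor, here with $\M$'s last block equal to $1$); (ii) the Jacobian and the weight/index shifts from the dilation $(\tau,z_1)\mapsto(4\tau,4z_1)$, which are powers of $4=2^2$; and (iii) the identification of the domains $\mathcal F_{n,r-1,4}$ and $\Gamma_{n,r-1}^J(4)\backslash(\H_n\times\C^{(n,r-1)})$ with the index $[\Gamma_n:\Gamma_0^{(n)}(4)]$, which must cancel the prefactor in the definition of the half-integral Petersson product. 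The arithmetic is routine but unforgiving; the remarks in the statement about the erroneous exponents in~\cite{EZ} and~\cite{KaKa:Dirichlet} are precisely a warning that the constant is easy to get wrong. I would therefore first verify the formula in the base case $n=r=1$ against Kohnen--Zagier and Eichler--Zagier (with the corrected denominator $4\sqrt m$), then check the $2$-power bookkeeping is additive in $n$ and in $r$ so that the general case follows by the same substitution.
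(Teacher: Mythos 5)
Your proposal takes a genuinely different route from the paper, and as written it has a gap at its central step. The paper never computes either Petersson product by direct substitution: it forms the Rankin--Selberg integrals $I_n(\phi_\M,\psi_\M;s)$ against $E_n^{(n)}(s;\tau)$ and $I_n(\phi_\mathfrak{M},\psi_\mathfrak{M};s)$ against $E_{n,4}^{(n)}(s;\tau)$, unfolds both by the identity of Proposition~\ref{prop:id:I_t} into Dirichlet series in the Fourier coefficients, uses the exact termwise correspondence $C_{\phi_\mathfrak{M}}(N_2',R_2')=A_{\phi_\M}(N_2,R_2)$ together with $\det(4N_2'-\mathfrak{M}^{-1}[{}^tR_2'])=2^{2n}\det(4N_2-\M^{-1}[{}^tR_2])$ to obtain $I_n(\phi_\mathfrak{M},\psi_\mathfrak{M};s)=2^{2n(-k+\frac r2-s+\frac{n+1}{2})}I_n(\phi_\M,\psi_\M;s)$, and then recovers the two Petersson norms as residues at $s=\frac{n+1}{2}$ of the completed integrals. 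The prefactor $[\Gamma_n:\Gamma_0^{(n)}(4)]^{-1}$ in the half-integral inner product cancels against the ratio of residues of $\mathcal E_{n,4}^{(n)}$ and $\mathcal E_n^{(n)}$, and $\det(4\mathfrak M)=2^{2r-4}\det(4\M)$ supplies the last power of $2$. None of this machinery appears in your plan.

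The concrete gap is your second step. The product $\phi_\mathfrak{M}\overline{\psi_\mathfrak{M}}$ is the \emph{double} sum $\sum_{R,R'}f_{R,\M_1}(4\tau,4z_1)\overline{g_{R',\M_1}(4\tau,4z_1)}$, and after $\iota_\M$ there is no theta factor left in the relevant variable to force the off-diagonal terms $R\neq R'$ to integrate to zero over $\mathcal F_{n,r-1,4}$. These cross terms are inner products of cusp forms with disjoint Fourier supports at the cusp $\infty$, which is not sufficient to make them vanish over a full fundamental domain for $\Gamma_0^{(n)}(4)$; their contribution is only controlled after accounting for all cusps of $\Gamma_0^{(n)}(4)$, which is precisely what the residue/unfolding method automates. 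Moreover, the dilation $(\tau,z_1)\mapsto(\tau/4,z_1/4)$ does not carry a fundamental domain for $\Gamma_{n,r-1}^J(4)$ to one for $\Gamma_{n,r-1}^J$, so the index $[\Gamma_n:\Gamma_0^{(n)}(4)]$ cannot be cancelled by a simple domain count. This is why even in your proposed base case $n=r=1$ the Eichler--Zagier and Kohnen--Zagier identities are proved via coefficient formulas or residues of Eisenstein series rather than by substitution. To make your argument work you would need to prove the vanishing or reorganization of the cross terms directly; otherwise the efficient path is the paper's comparison of residues of $\gamma_n(s)I_n(\cdot,\cdot;s)$ at $s=\frac{n+1}{2}$.
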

\begin{proof}
We recall the symbol $P_{0,n} = \left\{ \left( \begin{smallmatrix} * & * \\ 0^{(n,n)} & * \end{smallmatrix} \right) 
\in \Gamma_n \right\}$.
We set
\begin{eqnarray*}
  E_{n,4}^{(n)}(s ; \tau)
  &:=&
  \sum_{\gamma \in P_{0,n} \backslash \Gamma_0^{(n)}(4)}
  \det(\mbox{Im}(\gamma \cdot \tau))^s.
\end{eqnarray*}
We take the theta decompositions
\begin{eqnarray*}
  \phi_\mathfrak{M}(\tau,z_1)
  &=&
  \sum_{R \mod \Z^{(n,r-1)}(2\mathfrak{M})} \tilde{f}_R(\tau) \vartheta_{\mathfrak{M},R}(\tau,z_1), \\
    \psi_\mathfrak{M}(\tau,z_1)
  &=&
  \sum_{R \mod \Z^{(n,r-1)}(2\mathfrak{M})} \tilde{g}_R(\tau) \vartheta_{\mathfrak{M},R}(\tau,z_1).
\end{eqnarray*}
We put
\begin{eqnarray*}
  I_n(\phi_\mathfrak{M}, \psi_\mathfrak{M} ; s) 
  &:=&
  \int_{\Gamma_0^{(n)}(4) \backslash \H_n}
  \sum_{R \mod \Z^{(n,r-1)}(2\mathfrak{M})}
  \tilde{f}_R(\tau) \overline{\tilde{g}_R(\tau)} (\det(\mbox{Im}\tau))^{k-\frac12-\frac{r-1}{2}}
  E_{n,4}^{(n)}(s ; \tau) \, d\, \tau.
\end{eqnarray*}
where we put $\tau = u + \sqrt{-1} v$ and
$d\tau := \det(v)^{-n-1} d u \, d v$ and $d u = \prod_{l \leq m} u_{l,m}$, $dv = \prod_{l \leq m} v_{l,m}$. Here $u = (u_{l,m})$ and $v = (v_{l,m})$.
We put
\begin{eqnarray*}
 I_n(\phi_\M,\psi_\M;s)
 &:=&
  \int_{\Gamma_{n} \backslash \H_{n}}
    \sum_{R \!\! \mod \Z^{(n,r)} (2 \M)} f_R(\tau) \overline{g_R(\tau)} \det(\mbox{Im}(\tau))^{k-\frac{r}{2}}
    E_n^{(n)}(s ; \tau) \, d\tau,
\end{eqnarray*}
where $f_R$ and $g_R$ are denoted  in~(\ref{id:f_R_g_R}) through the decompositions of
$\phi_\M$ and $\psi_\M$ with the theta series.

We will show this lemma by comparing the residue of $  I_n(\phi_\mathfrak{M}, \psi_\mathfrak{M} ; s)
$ with the one of $  I_n(\phi_\M, \psi_\M ; s)$ at $s = \frac{n+1}{2}$.
Let
\begin{eqnarray*}
  \phi_\mathfrak{M}(\tau,z_1)
  &=&
  \sum_{\left( \begin{smallmatrix} N'_2 & \frac12 R'_2 \\ \frac12 ^t R'_2 & \mathfrak{M} \end{smallmatrix} \right) \in L_{n,r-1}^+(\mathfrak{M}) } C_{\phi_{\mathfrak{M}}}(N'_2,R'_2) e(N'_2 \tau + R'_2 {^t z_1})
\end{eqnarray*}
and
\begin{eqnarray*}
  \phi_\M(\tau,z)
  &=&
  \sum_{\left( \begin{smallmatrix} N_2 & \frac12 R_2 \\ \frac12 ^t R_2 & \M \end{smallmatrix} \right) \in L_{n,r-1}^+(\M) } A_{\phi_\M}(N_2,R_2) e(N_2 \tau + R_2 {^t z})
\end{eqnarray*}
be the Fourier expansions of $\phi_{\mathfrak{M}}$ and $\phi_\M$, respectively.
We similarly denote by $C_{\psi_\mathfrak{M}}(N'_2,R'_2)$ and $A_{\psi_\M}(N_2,R_2)$
the Fourier coefficients of $\psi_\mathfrak{M}$ and $\psi_\M$, respectively. 
We remark that if
\begin{eqnarray}\label{id:mat_int_1}
  \begin{pmatrix}
    N'_2 & \frac12 R'_2 \\ \frac12 ^t R'_2 & \mathfrak{M}
  \end{pmatrix}
  =
  4 \begin{pmatrix} N_2 & \frac12 R_{2,1} \\  \frac12 {^t R_{2,1}} & \M_1 \end{pmatrix}
  -
  \begin{pmatrix} R_{2,2} \\ L \end{pmatrix}
  ^t \begin{pmatrix} R_{2,2} \\ L \end{pmatrix}
\end{eqnarray}
and if
\begin{eqnarray}\label{id:mat_int_2}
  \begin{pmatrix}
    N_2 & \frac12 R_2  \\ 
    \frac12 {^t R_2} & \M 
  \end{pmatrix}  
  &=& 
  \begin{pmatrix}
    N_2 & \frac12 R_{2,1} & \frac12 R_{2,2} \\ 
    \frac12 {^t R_{2,1}} & \M_1 & \frac12 L \\
    \frac12 {^t R_{2,2}} & \frac12 ^t L & 1 
  \end{pmatrix},
\end{eqnarray}
then $C_{\phi_\mathfrak{M}}(N'_2,R'_2) = A_{\phi_\M}(N_2,R_2)$ and $C_{\psi_\mathfrak{M}}(N'_2,R'_2) = A_{\psi_\M}(N_2,R_2)$.

By the similar argument of the proof of the identity~(\ref{id:I_t}) in Proposition~\ref{prop:id:I_t}
which will be appeared in~\S\ref{s:proof_of_prop_int_D},
we have
\begin{eqnarray*}
  &&
  I_n(\phi_\mathfrak{M}, \psi_\mathfrak{M} ; s)  \\
  &=&
  2 \pi^{\frac14 n (n-1)} \prod_{i=1}^n \Gamma(s+k-\frac12-\frac{r-1}{2}-\frac{n+1}{2}-\frac{i-1}{2}) \\
  &&
  \times
      \sum_{ \mathfrak{N} \in L_{n,r-1}^+(\mathfrak{M}) \slash B_{n,r-1}(\Z)}
    \frac{1}{|\epsilon_{n,r-1}(\mathfrak{N})|} 
    C_{\phi_\mathfrak{M}}(N'_2,R'_2) \overline{C_{\psi_\mathfrak{M}}(N'_2,R'_2)}  \\
    && \times
      \det(\pi(4N'_2 - \mathfrak{M}^{-1}[^t R'_2]))^{-k+\frac12+\frac{r-1}{2}-s+\frac{n+1}{2}},
\end{eqnarray*}
where in the summation
we set $\mathfrak{N} = \begin{pmatrix} N'_2 & \frac12 R'_2 \\ \frac12 {^t R'_2} & \mathfrak{M}
\end{pmatrix}$.
We remark that if the identities (\ref{id:mat_int_1}) and (\ref{id:mat_int_2}) hold, then
\begin{eqnarray*}
  \det(\pi(4N'_2 - \mathfrak{M}^{-1}[^t R'_2]))
  &=&
  2^{2n} \det(\pi(4N_2 - \M^{-1}[^t R_2]))
\end{eqnarray*}
and $|\epsilon_{n,r-1}(\mathfrak{N})| = |\epsilon_{n,r}(\N)|$,
where we put
$\N = \begin{pmatrix} N_2 & \frac12 R_2 \\ \frac12 ^t R_2 & \M \end{pmatrix}$.
The map
$\begin{pmatrix} N_2 & \frac12 R_2 \\ \frac12 ^t R_2 & \M \end{pmatrix}
  \in L_{n,r}^+(\M)\slash B_{n,r}(\Z) \mapsto
  \begin{pmatrix} N'_2 & \frac12 R'_2 \\ \frac12 ^t R'_2 & \mathfrak{M} \end{pmatrix}
  \in L_{n,r-1}^+(\mathfrak{M})\slash B_{n,r}(\Z)$
  given by the identities (\ref{id:mat_int_1}) and (\ref{id:mat_int_2}) is bijective.
Therefore, by using the identity (\ref{id:I_t}) which will be appeared in~\S\ref{s:proof_of_prop_int_D}, 
we have
\begin{eqnarray}
  \notag
  &&
  I_n(\phi_\mathfrak{M}, \psi_\mathfrak{M} ; s)  \\
  \notag
  &=&
  2 \pi^{\frac14 n (n-1)} \prod_{i=1}^n \Gamma(s+k-\frac12-\frac{r-1}{2}-\frac{n+1}{2}-\frac{i-1}{2}) \\
  \notag
  &&
  \times
      \sum_{ \N \in L_{n,r}^+(\M) \slash B_{n,r}(\Z)}
    \frac{1}{|\epsilon_{n,r}(\N)|} 
    A_{\phi_\M}(N_2,R_2) \overline{A_{\psi_\M}(N_2,R_2)}  \\
   \notag
    && \times
    2^{2n(-k+\frac{r}{2}-s+\frac{n+1}{2})}  \det(\pi(4N_2 - \M^{-1}[^t R_2]))^{-k+\frac12+\frac{r-1}{2}-s+\frac{n+1}{2}} \\
  \label{id:In_M}
  &=&
  2^{2n(-k+\frac{r}{2}-s+\frac{n+1}{2})}\, 
  I_n(\phi_\M, \psi_\M ; s) .
\end{eqnarray}
We put
\begin{eqnarray*}
  \gamma_n(s) &:=& \prod_{i=1}^{n} \xi(2s + 1 - i) \prod_{i=1}^{[n/2]} \xi(4s-2i),
\end{eqnarray*}
where $\displaystyle{ \xi(s) = \pi^{-\frac{s}{2}} \Gamma\!\left(\frac{s}{2}\right) \zeta(s) }$ 
is the symbol denoted before Theorem~\ref{th:eisenstein_series},
and we put
\begin{eqnarray*}
  \mathcal{E}_{n,4}^{(n)}(s ; \tau)
  &:=&
  \gamma_n(s) E_{n,4}^{(n)}(s ; \tau) .
\end{eqnarray*}
Then $\mathcal{E}_{n,4}^{(n)}(s ; \tau)$ has a meromorphic continuation to the whole complex plane
in $s$ (cf.~\cite[Theorem 1]{kalinin}).
Moreover, $\mathcal{E}_{n,4}^{(n)}(s ; \tau)$ has a simple pole at $s = (n+1)/2$ with the residue
\begin{eqnarray*}
 \mbox{Res}_{s = \frac{n+1}{2}} \mathcal{E}_{n,4}^{(n)}(s ; \tau)
 &=&
 \frac{1}{[\Gamma_n : \Gamma_0^{(n)}(4)]}
 \mbox{Res}_{s = \frac{n+1}{2}} \mathcal{E}_{n}^{(n)}(s ; \tau) \\
 &=&
 \frac{1}{[\Gamma_n : \Gamma_0^{(n)}(4)]}
 \prod_{j=2}^{n} \xi(j) \prod_{j=1}^{[n/2]} \xi(2j+1).
\end{eqnarray*}
Therefore the residue of $\gamma_n(s) I_n(\phi_\mathfrak{M}, \psi_\mathfrak{M} ; s) $
at $s = \frac{n+1}{2}$ is
\begin{eqnarray*}
 &&
 \frac{1}{[\Gamma_n : \Gamma_0^{(n)}(4)]}
 \prod_{j=2}^{n} \xi(j) \prod_{j=1}^{[n/2]} \xi(2j+1)  \\
 && \times
     \int_{\Gamma_0^{(n)}(4) \backslash \H_n}
  \sum_{R \mod \Z^{(n,r-1)}(2\mathfrak{M})}
  \tilde{f}_R(\tau) \overline{\tilde{g}_R(\tau)} (\det(\mbox{Im}\tau))^{k-\frac12-\frac{r-1}{2}} \, d\, \tau \\
  &=&
  \frac{2}{1 + \delta_{0,r-1}}
  \det(4\mathfrak{M})^{\frac{n}{2}}
 \prod_{j=2}^{n} \xi(j) \prod_{j=1}^{[n/2]} \xi(2j+1) \langle \phi_\mathfrak{M}, \psi_\mathfrak{M} \rangle.  
\end{eqnarray*}
On the other hand the residue of $\gamma_n(s) I_n(\phi_\M,\psi_\M)$ at $s = \frac{n+1}{2}$ is
\begin{eqnarray*}
    2 \det(4\M)^{\frac{n}{2}}
 \prod_{j=2}^{n} \xi(j) \prod_{j=1}^{[n/2]} \xi(2j+1) \langle \phi_\M, \psi_\M \rangle.  
\end{eqnarray*}
We remark the identity $\det(4\mathfrak{M}) = 2^{2r-4} \det(4 \M)$.
Thus, by virtue of the identity~(\ref{id:In_M}), we have the lemma.
\end{proof}

Let $\phi_\mathfrak{M}, \psi_\mathfrak{M} \in J_{k-\frac12,\mathfrak{M}}^{(n)\, cusp} $ 
be Jacobi cusp forms of weight $k-\frac12$ with the index $\mathfrak{M} \in L_{r-1}^+$
on $\Gamma_0^{(n)}(4)$.
We remark that if $r =1$, then $\phi_\mathfrak{M}$ and $\psi_\mathfrak{M}$ 
are Siegel cusp forms of weight $k-\frac12$.
For any natural number $t$ $(1 \leq t \leq n)$, we take the Fourier-Jacobi expansions
\begin{eqnarray*}
  \phi_\mathfrak{M}(\tau,z) e(\mathfrak{M} \omega) &=& 
    \sum_{\mathfrak{N} \in L_{t,r-1}^+(\mathfrak{M})} \phi_{\mathfrak{N}}(\tau',z') e(\mathfrak{N} \omega'), \\
  \psi_{\mathfrak{M}}(\tau,z) e(\mathfrak{M} \omega)
  &=&
  \sum_{\mathfrak{N} \in L_{t,r-1}^+(\mathfrak{M})} \psi_{\mathfrak{N}}(\tau',z') e(\mathfrak{N} \omega').
\end{eqnarray*}
For complex number $s$ which real part is sufficient large, we set
\begin{eqnarray*}
  D_t(\phi_\mathfrak{M}, \psi_\mathfrak{M} ; s)
  &:=&
  \sum_{\mathfrak{N} \in B_{t,r-1}(\Z) \backslash L_{t,r-1}^+(\mathfrak{M})}
   \frac{\langle  \phi_{\mathfrak{N}}, \psi_{\mathfrak{N}} \rangle}{
    |\epsilon_{t,r-1}(\mathfrak{N})| \det(\mathfrak{N})^{s}}.
\end{eqnarray*}

\begin{lemma}
We assume that $\phi_\mathfrak{M}$ and $\psi_\mathfrak{M}$ belong to the plus space
$J_{k-\frac12,\mathfrak{M}}^{(n)+\, cusp} $.
Let $\phi_\M$ and $\psi_\M$ $\in J_{k,\M}^{(n)\, cusp} $
be Jacobi cusp forms which satisfy $\phi_\mathfrak{M} = \iota_\M(\phi_\M)$ and $\psi_\mathfrak{M} = \iota_\M(\psi_\M)$.
Then, for any $t$ $(1 \leq t \leq n)$, we have
\begin{eqnarray*}
 D_t(\phi_\mathfrak{M},\psi_\mathfrak{M} ; s)  &=&
  (1 + \delta_{1,r}) 2^{-2(k-1)(n-t) - 2(r+t-1)s} D_t(\phi_\M, \psi_\M ; s).
\end{eqnarray*}
\end{lemma}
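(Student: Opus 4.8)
The plan is to compare the two Dirichlet series term by term, using three ingredients: an index correspondence that generalizes the matrix identities~(\ref{id:mat_int_1})--(\ref{id:mat_int_2}) from the $t=n$ situation to Fourier--Jacobi coefficients; the compatibility of the isomorphism $\iota$ with Fourier--Jacobi expansions; and Lemma~\ref{lem:matrix_jacobi_petersson} applied in degree $n-t$ to convert Petersson norms. Throughout, write $\M = \smat{\M_1}{\frac12 L}{\frac12{^t L}}{1}$, and for $\N\in L_{t,r}^+(\M)$ split off the last column as $\N = \smat{\N_1}{\frac12\mathcal L}{\frac12{^t\mathcal L}}{1}$ with $\N_1\in L_{t,r-1}^+(\M_1)$ and $\mathcal L\in\Z^{(t+r-1,1)}$; concretely, if $R_2=(R_{2,1},R_{2,2})$ with $R_{2,2}\in\Z^{(t,1)}$ then $\mathcal L={^t(}{^t R_{2,2}}\ {^t L})$ and $\N_1=\smat{N_2}{\frac12 R_{2,1}}{\frac12{^t R_{2,1}}}{\M_1}$.

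First I would set up the correspondence $\N\mapsto\mathfrak N:=4\N_1-\mathcal L\,{^t\mathcal L}$, which takes values in $L_{t,r-1}^+(\mathfrak M)$ and is exactly the Fourier--Jacobi analogue of~(\ref{id:mat_int_1})--(\ref{id:mat_int_2}). As in the proof of Lemma~\ref{lem:matrix_jacobi_petersson}, one checks that $\N\mapsto\mathfrak N$ descends to a bijection of $L_{t,r}^+(\M)/B_{t,r}(\Z)$ onto the subset of $L_{t,r-1}^+(\mathfrak M)/B_{t,r-1}(\Z)$ on which the plus-space coefficients are supported, that $|\epsilon_{t,r}(\N)|=|\epsilon_{t,r-1}(\mathfrak N)|$, and that, since $\det\N=\det(\N_1-\tfrac14\mathcal L\,{^t\mathcal L})$ while $\mathfrak N=4(\N_1-\tfrac14\mathcal L\,{^t\mathcal L})$, one has $\det\mathfrak N=2^{2(t+r-1)}\det\N$.

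Next I would prove the structural fact $\phi_{\mathfrak N}=\iota_\N(\phi_\N)$ and $\psi_{\mathfrak N}=\iota_\N(\psi_\N)$ under this correspondence, where $\iota_\N$ is the isomorphism attached to the index $\N$ (which has a $1$ in its lower-right corner). This rests on the compatibility between the theta decomposition and the Fourier--Jacobi expansion of Jacobi forms: the theta decomposition defining $\iota_\M$ is taken in the direction of the last column of $z$, the Fourier--Jacobi expansion is an expansion in the $\omega'$-block, and these two operations commute, so the $\N$-th Fourier--Jacobi coefficient of $\iota_\M(\phi_\M)$ is obtained by applying $\iota_\N$ to the $\N$-th Fourier--Jacobi coefficient of $\phi_\M$, the rescalings $\tau\mapsto 4\tau$, $z_1\mapsto 4z_1$ being compatible with the block decomposition. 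When $t=n$ this is just the Fourier-coefficient identity $C_{\phi_\mathfrak M}(N_2',R_2')=A_{\phi_\M}(N_2,R_2)$ recorded in the proof of Lemma~\ref{lem:matrix_jacobi_petersson}, whence $\langle\phi_{\mathfrak N},\psi_{\mathfrak N}\rangle=\phi_{\mathfrak N}\overline{\psi_{\mathfrak N}}=\langle\phi_\N,\psi_\N\rangle$; for $1\le t<n$, Lemma~\ref{lem:matrix_jacobi_petersson} in degree $n-t$ with the index $\N$ (of size $t+r\ge 2$, so the factor $(1+\delta_{1,t+r})^{-1}$ there equals $1$) gives $\langle\phi_\N,\psi_\N\rangle=2^{2(n-t)(k-1)}\langle\phi_{\mathfrak N},\psi_{\mathfrak N}\rangle$. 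In either case $\langle\phi_{\mathfrak N},\psi_{\mathfrak N}\rangle=2^{-2(n-t)(k-1)}\langle\phi_\N,\psi_\N\rangle$.

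Finally I would substitute these identities into the definition of $D_t(\phi_\mathfrak M,\psi_\mathfrak M;s)$, re-index the sum over $L_{t,r}^+(\M)/B_{t,r}(\Z)$ via the bijection, and collect the constants $2^{-2(n-t)(k-1)}$ (from the norms), $2^{-2(t+r-1)s}$ (from $\det\mathfrak N^{-s}$), and the unchanged factor $1/|\epsilon_{t,r}(\N)|$. The one point that demands genuine care is the case $r=1$, in which $\mathfrak M$ is empty and $\phi_\mathfrak M,\psi_\mathfrak M$ are Siegel cusp forms: there the counting of representatives, the support conditions of the plus space, and the normalization of the half-integral-weight inner products all interact, and the remaining factor $(1+\delta_{1,r})$ falls out of this bookkeeping in the same manner as the analogous factor in Lemma~\ref{lem:matrix_jacobi_petersson}. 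Tracking that factor correctly, and verifying the commutation of $\iota$ with the Fourier--Jacobi expansion, is where the real work lies; the rest is routine manipulation of the index correspondence together with a citation of Lemma~\ref{lem:matrix_jacobi_petersson}.
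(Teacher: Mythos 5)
Your overall route is the paper's own: you commute $\iota$ with the Fourier--Jacobi expansion so that the $\mathfrak N$-th Fourier--Jacobi coefficient of $\phi_{\mathfrak M}$ is $\iota_\N(\phi_\N)$, you invoke Lemma~\ref{lem:matrix_jacobi_petersson} in degree $n-t$ to convert the Petersson norms, and you re-index the sum via the correspondence $\N\mapsto\mathfrak N$ using $|\epsilon_{t,r}(\N)|=|\epsilon_{t,r-1}(\mathfrak N)|$ and $\det\mathfrak N=2^{2(r+t-1)}\det\N$. All of those ingredients, and the way you combine them, match the paper's proof.

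The genuine gap is the factor $(1+\delta_{1,r})$, and your own bookkeeping leaves no room for it to appear. Applying Lemma~\ref{lem:matrix_jacobi_petersson} to the index $\N$, whose size is $t+r\ge 2$, you correctly obtain $\langle\phi_{\mathfrak N},\psi_{\mathfrak N}\rangle=2^{-2(n-t)(k-1)}\langle\phi_\N,\psi_\N\rangle$ with no delta factor (and for $t=n$ the relevant Fourier coefficients are literally equal); at the same time you assert that $\N\mapsto\mathfrak N$ is a bijection of equivalence classes preserving the stabilizer orders. Substituting these into the two Dirichlet series yields exactly $D_t(\phi_{\mathfrak M},\psi_{\mathfrak M};s)=2^{-2(k-1)(n-t)-2(r+t-1)s}D_t(\phi_\M,\psi_\M;s)$, i.e.\ the claimed identity \emph{without} $(1+\delta_{1,r})$. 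Declaring that the missing factor ``falls out of the bookkeeping'' when $r=1$ is therefore not a deferral of routine work but an unresolved contradiction with the rest of your outline: every ledger entry (norms, stabilizers, determinants, fibers of the correspondence) has already been fixed, so a factor of $2$ has nowhere left to enter. The paper obtains it by writing the norm relation as $\langle\phi_{\mathfrak N},\psi_{\mathfrak N}\rangle=(1+\delta_{1,r})\,2^{-2(n-t)(k-1)}\langle\phi_\N,\psi_\N\rangle$, with the Kronecker delta indexed by the original $r$ rather than by the size $t+r$ of the new index --- precisely the reading your (more literal) application of Lemma~\ref{lem:matrix_jacobi_petersson} rules out. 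To complete the argument you must either justify that indexing, or exhibit the factor of $2$ concretely in the $r=1$ case (for instance in the fibers of $\N\mapsto\mathfrak N$ over the plus-space support, or in the normalization of the half-integral-weight inner product), rather than asserting it will materialize.
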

In the case of $r = 1$, $\M = 1$ and $t = n$ this lemma has been shown in~\cite{KaKa}.
\begin{proof}
 Assume $\N$ is a matrix in $L_{t,r}^+(\M)$.
 Let $\phi_\N$ and $\psi_\N$ be the $\N$-th Fourier-Jacobi coefficients of $\phi_\M$ and $\psi_\M$,
 respectively.
 We put $\phi_\mathfrak{N} = \iota_\N(\phi_\N)$ and $\psi_\mathfrak{N} = \iota_\N(\psi_\N)$.
 Then $\phi_\mathfrak{N}$ and $\phi_\mathfrak{N}$ are $\mathfrak{N}$-th Fourier-Jacobi
 coefficients of $\phi_\mathfrak{M}$ and $\psi_\mathfrak{M}$, respectively.
 We remark that $\phi_\mathfrak{N}$ and $\psi_\mathfrak{N}$ belong to $J_{k-\frac12,\mathfrak{N}}^{(n-t)+\, cusp} $
 and remark that $\phi_\N$ and $\psi_\N$ belong to $J_{k,\N}^{(n-t)+\, cusp} $.
  
 By virtue of Lemma~\ref{lem:matrix_jacobi_petersson},
 we have
 $\langle \phi_\mathfrak{N}, \psi_\mathfrak{N} \rangle =
  (1 + \delta_{1,r}) 2^{-2(n-t)(k-1)}  \langle \phi_\N, \psi_\N \rangle$.
 We have also
 $|\epsilon_{t,r-1}(\mathfrak{N})| = |\epsilon_{t,r}(\N)| $ and
 $\det\mathfrak{N} = 2^{2(r+t-1)} \det\N$.
 Thus we conclude the lemma.
\end{proof}

We set
\begin{eqnarray*}
 &&
 \mathcal{D}_{t}(\phi_\mathfrak{M}, \psi_\mathfrak{M} ; s) \\
 &:=&
 \pi^{-ts} (\det \mathfrak{M})^s \prod_{j=1}^t \left(\Gamma\left(s - \frac{j-1}{2}\right) \xi(2s-2k+2n+r+2-t-j)\right) \\
 &&
 \times \left( \prod_{j=1}^{[t/2]} \xi(4s-4k+2n+2r+2-2j) \right)
  D_{t}(\phi_\mathfrak{M}, \psi_\mathfrak{M} ; s).
\end{eqnarray*}
Then 
\begin{eqnarray*}
  \mathcal{D}_{t}(\phi_\mathfrak{M}, \psi_\mathfrak{M} ; s) 
  &=&
  (1 + \delta_{1,r}) 2^{-2(k-1)(n-t)} \mathcal{D}_{t}(\phi_\M, \psi_\M ; s) .
\end{eqnarray*}

Due to Theorem~\ref{th:D_t_M} we have the followings.
%
\begin{theorem}\label{th:D_t_M_half}
%
The function $\mathcal{D}_t(\phi_{\mathfrak{M}}, \psi_{\mathfrak{M}} ; s)$
has a meromorphic continuation to the whole complex plane
and holomorphic for $Re(s) > k - \dfrac{r}{2}$.
It has a simple pole at $s = k - \dfrac{r}{2}$ with the residue
\begin{eqnarray*}
  (1 + \delta_{1,r})^{-1} 2^{2tk-t} \pi^{-\frac14 t (t-1)} \det(\mathfrak{M})^{\frac{t}{2}} \langle \phi_\mathfrak{M}, \psi_\mathfrak{M} \rangle
  \prod_{j=2}^t \xi(j) \prod_{j=1}^{[t/2]} \xi(2n-2t+2j+1)
\end{eqnarray*}
when $n > 1$ and with the residue
$2^{2k-3}  \det(\mathfrak{M})^{\frac12} \langle \phi_\mathfrak{M}, \psi_\mathfrak{M} \rangle$
when $n = t = 1$.

It satisfies the functional equation
\begin{eqnarray*}
  \mathcal{D}_t(\phi_\mathfrak{M}, \psi_\mathfrak{M} ; s)
  &=&
  \mathcal{D}_t\!\left(\phi_\mathfrak{M}, \psi_\mathfrak{M} ; 2k-n-r+\frac{t-1}{2}-s\right).
\end{eqnarray*}
Moreover, if $t = 1$, then
  $\mathcal{D}_1(\phi_\mathfrak{M},\psi_\mathfrak{M} ; s)$ has a meromorphic continuation to
  the whole complex plane and holomorphic
  except for simple poles at $s = k -\dfrac{r}{2}$ and $k - \dfrac{r}{2} - n$.
  The residue at $s = k-\dfrac{r}{2}$ is 
  \begin{eqnarray*}
   (1 + \delta_{1,n})^{-1} (1 + \delta_{1,r})^{-1} 2^{2k-1} \det(\mathfrak{M})^{\frac12}
   \langle \phi_\mathfrak{M}, \psi_\mathfrak{M} \rangle.
  \end{eqnarray*}
\end{theorem}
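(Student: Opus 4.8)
The plan is to read off Theorem~\ref{th:D_t_M_half} from Theorem~\ref{th:D_t_M} by means of the proportionality $\mathcal{D}_{t}(\phi_\mathfrak{M}, \psi_\mathfrak{M} ; s) = (1 + \delta_{1,r})\, 2^{-2(k-1)(n-t)}\, \mathcal{D}_{t}(\phi_\M, \psi_\M ; s)$ established just above, where $\phi_\M, \psi_\M \in J_{k,\M}^{(n)\, cusp}$ are the preimages of $\phi_\mathfrak{M}, \psi_\mathfrak{M}$ under the linear isomorphism $\iota_\M$. Since the factor $(1+\delta_{1,r})\, 2^{-2(k-1)(n-t)}$ does not depend on $s$, the first step is simply to transport from Theorem~\ref{th:D_t_M} every analytic statement about $\mathcal{D}_{t}(\phi_\M, \psi_\M ; s)$: the meromorphic continuation to the whole plane, holomorphy for $Re(s) > k - r/2$, the simple pole at $s = k - r/2$ together with (when $t = 1$) the additional simple pole at $s = k - r/2 - n$, and the functional equation with reflection $s \mapsto 2k - n - r + (t-1)/2 - s$. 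All of these hold verbatim for $\mathcal{D}_{t}(\phi_\mathfrak{M}, \psi_\mathfrak{M} ; s)$.

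The only computation left is the residues. Taking residues at $s = k - r/2$ on both sides of the proportionality, I would insert the residue of $\mathcal{D}_{t}(\phi_\M, \psi_\M ; s)$ supplied by Theorem~\ref{th:D_t_M} (noting $\delta_{0,r} = 0$ since $r \ge 1$), and then rewrite the outcome in terms of half-integral data using two identities already in hand: the Petersson norm identity of Lemma~\ref{lem:matrix_jacobi_petersson}, $\langle \phi_\M, \psi_\M \rangle = (1 + \delta_{1,r})^{-1}\, 2^{2n(k-1)}\, \langle \phi_\mathfrak{M}, \psi_\mathfrak{M} \rangle$, and the determinant relation $\det(\mathfrak{M}) = 2^{2r-2} \det(\M)$ (equivalently $\det(4\M) = 4\det(\mathfrak{M})$) used in the proof of that lemma. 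Collecting the powers of $2$ and of $\det$ then yields the claimed residue, both in the generic case $n > 1$ and in the case $n = t = 1$; the $t = 1$ residue at $s = k - r/2$ is obtained the same way, starting instead from the $t = 1$ residue formula of Theorem~\ref{th:D_t_M}.

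I do not expect a genuine obstacle: the substantive content --- the Rankin--Selberg integral representation of Proposition~\ref{prop:int_D}, the analytic continuation and functional equation of the real-analytic Siegel--Eisenstein series from Theorem~\ref{th:eisenstein_series}, and the comparison of the two Petersson norms in Lemma~\ref{lem:matrix_jacobi_petersson} --- has already been carried out, so Theorem~\ref{th:D_t_M_half} is a formal corollary. The one delicate point is the constant bookkeeping: keeping the sizes of $\M$ (size $r$) and $\mathfrak{M}$ (size $r-1$) straight inside the determinants, respecting the $\delta_{1,r}$ versus $\delta_{0,r}$ conventions, and tracking the powers of $2$ introduced by $\iota_\M$, so that the factors $2^{2tk-t}$, $2^{2k-3}$, $2^{2k-1}$ and the occurrences of $(1+\delta_{1,r})^{-1}$ come out with the correct exponents. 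Finally I would note that for $r = 1$, where $\mathfrak{M} = \emptyset$ and $\phi_\mathfrak{M}, \psi_\mathfrak{M}$ are Siegel cusp forms of weight $k - \frac12$ in the generalized plus-space, the statement recovers the half-integral Siegel results of Katsurada--Kawamura (for $t = n$, $\M = 1$) and of Kohnen--Zagier and Imamo\=glu--Martin (for $n = t = 1$).
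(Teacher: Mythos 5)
Your proposal is correct and follows exactly the paper's own route: the paper establishes the $s$-independent proportionality $\mathcal{D}_{t}(\phi_\mathfrak{M}, \psi_\mathfrak{M} ; s) = (1 + \delta_{1,r})\, 2^{-2(k-1)(n-t)}\, \mathcal{D}_{t}(\phi_\M, \psi_\M ; s)$ and then reads Theorem~\ref{th:D_t_M_half} off Theorem~\ref{th:D_t_M}, converting the residues by means of Lemma~\ref{lem:matrix_jacobi_petersson} and the relation $\det(4\M) = 4\det(\mathfrak{M})$, precisely as you describe. No gap.
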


In particular, if $r = 1$ and $\M = 1$, 
then the space $J_{k,1}^{(n)\, cusp}$ of Jacobi cusp forms of degree $n$
is linearly isomorphic to the generalized plus-space $S_{k-\frac12}^{+}(\Gamma_0^{(n)}(4))$
as Hecke algebra modules.
Here $S_{k-\frac12}^{+}(\Gamma_0^{(n)}(4))$ is a certain subspace
of Siegel cusp forms of weight $k-\frac12$ of degree $n$
(see \cite{Ib} for the definition and the isomorphism).
We have the following.

\begin{cor}\label{cor:dirichlet_plus_space}
Let $F$, $G$ $\in S_{k-\frac12}^{+}(\Gamma_0^{(n)}(4))$.
The function $\mathcal{D}_t(F, G ; s)$
has a meromorphic continuation to the whole complex plane
and holomorphic for $Re(s) > k - \dfrac{1}{2}$.
It has a simple pole at $s = k - \dfrac{1}{2}$ with the residue
\begin{eqnarray*}
  2^{2tk-t-1} \pi^{-\frac14 t (t-1)}  \langle F, G \rangle
  \prod_{j=2}^t \xi(j) \prod_{j=1}^{[t/2]} \xi(2n-2t+2j+1)
\end{eqnarray*}
when $n > 1$ and with the residue
$2^{2k-3}   \langle F, G \rangle$
when $n = t = 1$.

It satisfies the functional equation
\begin{eqnarray*}
  \mathcal{D}_t(F, G ; s)
  &=&
  \mathcal{D}_t\!\left(F, G ; 2k-n+\frac{t-3}{2}-s\right).
\end{eqnarray*}
Moreover, if $t = 1$, then
  $\mathcal{D}_1(F, G ; s)$ has a meromorphic continuation to
  the whole complex plane and holomorphic
  except for simple poles at $s = k -\dfrac{1}{2}$ and $k - \dfrac{1}{2} - n$.
  The residue at $s = k-\dfrac{1}{2}$ is 

\begin{eqnarray*}
 Res_{s=k-\frac12} \mathcal{D}_1(F,G;s)
 &=&
 Res_{s=k-\frac12}( D_1(F,G;s) \pi^{-s} \Gamma(s) \xi(2s-2k+2n+1) ) \\
 &=&
 (1 + \delta_{1,n})^{-1} 2^{2(k-1)} \langle F, G \rangle .
\end{eqnarray*}
\end{cor}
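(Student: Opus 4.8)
The plan is to deduce the corollary directly from Theorem~\ref{th:D_t_M_half} by specializing to the case $r=1$ and $\M=1$. In that case $\mathfrak{M}=\emptyset$, so $\det(\mathfrak{M})=1$ by the convention fixed in \S\ref{s:plus_space_jacobi_forms}, and the plus-space $J_{k-\frac12,\mathfrak{M}}^{(n)+\,cusp}$ of index-$\emptyset$ Jacobi cusp forms is precisely the space $S_{k-\frac12}^{+}(\Gamma_0^{(n)}(4))$ of Siegel cusp forms of weight $k-\frac12$ of degree $n$ in the generalized plus-space of Ibukiyama~\cite{Ib}. First I would record that, for $F,G\in S_{k-\frac12}^{+}(\Gamma_0^{(n)}(4))$, the Dirichlet series $D_t(F,G;s)$ and its completion $\mathcal{D}_t(F,G;s)$ used in the corollary are exactly those of \S\ref{s:plus_space_jacobi_forms} with $\phi_\mathfrak{M}=F$, $\psi_\mathfrak{M}=G$ and $r=1$; in particular the Petersson products coincide, and inserting $r=1$, $\det(\mathfrak{M})=1$ into the definition of $\mathcal{D}_t$ gives $\mathcal{D}_t(F,G;s)=\pi^{-ts}\prod_{j=1}^{t}\bigl(\Gamma(s-\frac{j-1}{2})\,\xi(2s-2k+2n+3-t-j)\bigr)\prod_{j=1}^{[t/2]}\xi(4s-4k+2n+4-2j)\,D_t(F,G;s)$, which for $t=1$ collapses to $\mathcal{D}_1(F,G;s)=\pi^{-s}\Gamma(s)\,\xi(2s-2k+2n+1)\,D_1(F,G;s)$, in agreement with the last display of the corollary.

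Next I would read off each assertion of Theorem~\ref{th:D_t_M_half} at $r=1$. Since $\delta_{1,r}=1$ we have $(1+\delta_{1,r})^{-1}=\frac12$, and since $\det(\mathfrak{M})=1$ every power of $\det(\mathfrak{M})$ is $1$. The holomorphy for $\mathrm{Re}(s)>k-\frac{r}{2}=k-\frac12$ is then immediate; the residue at $s=k-\frac12$ for $n>1$ becomes $\frac12\cdot 2^{2tk-t}\,\pi^{-\frac14 t(t-1)}\langle F,G\rangle\prod_{j=2}^{t}\xi(j)\prod_{j=1}^{[t/2]}\xi(2n-2t+2j+1)=2^{2tk-t-1}\pi^{-\frac14 t(t-1)}\langle F,G\rangle\prod_{j=2}^{t}\xi(j)\prod_{j=1}^{[t/2]}\xi(2n-2t+2j+1)$, and for $n=t=1$ it becomes $2^{2k-3}\langle F,G\rangle$. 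The functional equation of Theorem~\ref{th:D_t_M_half} reads $\mathcal{D}_t(F,G;s)=\mathcal{D}_t(F,G;2k-n-1+\frac{t-1}{2}-s)$, and $2k-n-1+\frac{t-1}{2}=2k-n+\frac{t-3}{2}$, which is the centre claimed.

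For $t=1$ the last part of Theorem~\ref{th:D_t_M_half} gives, with $r=1$, the meromorphic continuation of $\mathcal{D}_1(F,G;s)$, holomorphic apart from simple poles at $s=k-\frac12$ and $s=k-\frac12-n$, and the residue at $s=k-\frac12$ equals $(1+\delta_{1,n})^{-1}(1+\delta_{1,r})^{-1}2^{2k-1}\det(\mathfrak{M})^{1/2}\langle\phi_\mathfrak{M},\psi_\mathfrak{M}\rangle=(1+\delta_{1,n})^{-1}\cdot\frac12\cdot 2^{2k-1}\langle F,G\rangle=(1+\delta_{1,n})^{-1}2^{2(k-1)}\langle F,G\rangle$, as asserted. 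In this way the corollary contains no analytic input beyond Theorem~\ref{th:D_t_M_half}, and I expect no genuine obstacle; the only steps requiring care, and the ones I would verify most carefully, are the two pieces of bookkeeping -- that $J_{k-\frac12,\emptyset}^{(n)+\,cusp}$ coincides with $S_{k-\frac12}^{+}(\Gamma_0^{(n)}(4))$ together with the matching Petersson normalisation, and that all powers of $2$ coming through Lemma~\ref{lem:matrix_jacobi_petersson} and the ensuing comparison of the two Dirichlet series are tracked correctly in the degenerate case $r=1$, $\mathfrak{M}=\emptyset$.
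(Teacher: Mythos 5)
Your proposal is correct and coincides with the paper's (implicit) argument: the paper states this corollary as an immediate specialization of Theorem~\ref{th:D_t_M_half} to $r=1$, $\M=1$, $\mathfrak{M}=\emptyset$, $\det(\mathfrak{M})=1$, which is exactly what you carry out, and all your constant evaluations (the factor $(1+\delta_{1,r})^{-1}=\tfrac12$, the residues, the centre $2k-n+\tfrac{t-3}{2}$, and the collapse of the gamma factor for $t=1$) check out.
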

We remark that the case $t = n$ in Corollary~\ref{cor:dirichlet_plus_space}
has been shown in~\cite{KoZa} (for $n=1$) and in~\cite{KaKa} (for $n>1$).

\section{Proof of Proposition~\ref{prop:int_D}}\label{s:proof_of_prop_int_D}
In this section we shall prove Proposition~\ref{prop:int_D}.
We use the same notation in \S\ref{s:jacobi_int}.
For $\tau \in \H_n$, we decompose $\tau$ as
$  \tau = \begin{pmatrix} \tau_1 & z'_1 \\ ^t z'_1 & \tau_2 \end{pmatrix},
  \tau_1 \in \H_{n-t}, \tau_2 \in \H_{t}, z'_1 \in \C^{(n-t,t)}$.
We write $\tau = u + i v$, $\tau_j = u_j + i v_j$ $(j = 1,2)$ and $z'_1 = x'_1 + i y'_1$ with matrices
$u,v, u_j, v_j, x'_1, y'_1$ which entries are real numbers.
For $\tau_1 \in \H_{n-t}$, we fix a fundamental domain
\begin{eqnarray*}
  D_t(\tau_1)
  &:=& \C^{(n-t,t)} \slash (\tau_1 \Z^{(n-t,t)} + \Z^{(n-t,t)})
\end{eqnarray*}  
and put
\begin{eqnarray*}
  \widetilde{D_t(\tau_1)}
  &:=&  \left\{ (z'_1 , \tau_2) \in D_t(\tau_1) \times \H_t \, \left| \, 
  \begin{pmatrix} \tau_1 & z'_1 \\ ^t z'_1 & \tau_2 \end{pmatrix} \in \H_n \right\} \right. .
\end{eqnarray*}
The group $P_{0,t} = \left\{ \begin{pmatrix} A & B \\ 0^{(t,t)} & {^t A}^{-1} \end{pmatrix} \in \Gamma_t \right\}$ acts on $\widetilde{D_t(\tau_1)}$ by
\begin{eqnarray*}
  \begin{pmatrix} A & B \\ 0^{(t,t)} & {^t A}^{-1} \end{pmatrix} \cdot (z'_1, \tau_2) 
  := (z'_1 {^t A}, \tau_2[^t A] + B {^t A})
\end{eqnarray*}
for $\begin{pmatrix} A & B \\ 0^{(t,t)} & {^t A}^{-1} \end{pmatrix}  \in P_{0,t}$ and
for $(z'_1,\tau_2) \in \widetilde{D_t(\tau_1)}$.

We put
\begin{eqnarray*}
 I_t(\phi_\M,\psi_\M;s)
 &:=&
  \int_{\Gamma_{n} \backslash \H_{n}}
    \sum_{R \!\! \mod \Z^{(n,r)} (2 \M)} f_R(\tau) \overline{g_R(\tau)} \det(v)^{k-\frac{r}{2}}
    E_t^{(n)}(s ; \tau) \, d\tau,
\end{eqnarray*}
where $f_R$ and $g_R$ are denoted  in~(\ref{id:f_R_g_R}) through the decompositions of
$\phi_\M$ and $\psi_\M$ with the theta series,
and where we put $d\tau := \det(v)^{-n-1} d u \, d v$ and $d u = \prod_{l \leq m} u_{l,m}$, $dv = \prod_{l \leq m} v_{l,m}$. Here $u = (u_{l,m})$ and $v = (v_{l,m})$.

We have
\begin{eqnarray*}
  &&
    I_t(\phi_\M,\psi_\M;s)
 \\
  &=& 
  \int_{P_{n-t,t} \backslash \H_{n}}
    \sum_{R \!\! \mod \Z^{(n,r)} (2 \M)} f_R(\tau) \overline{g_R(\tau)} \det(v)^{k-\frac{r}{2}+s} \det(v_1)^{-s} \, d\tau    
   \\
  &=& 
  \frac{1 + \delta_{t,n}}{2}
  \int_{\Gamma_{n-t} \backslash \H_{n-t}}
  \int_{P_{0,t} \backslash \widetilde{D_t(\tau_1)}}
    \sum_{R \!\! \mod \Z^{(n,r)} (2 \M)} f_R(\tau) \overline{g_R(\tau)} \det(v)^{k-\frac{r}{2}+s-(n+1)}  \\
    &&
    \times
    \det(v_1)^{-s}
    \, d x'_1\, d y'_1\, d u_2\, d v_2\, d u_1\, d v_1.
\end{eqnarray*}
We write $R = \left( \begin{matrix} R_1 \\ R_2 \end{matrix} \right)$
with $R_1 \in \Z^{(n-t,r)}$ and $R_2 \in \Z^{(t,r)}$.
We take Fourier-Jacobi expansions of $f_R$ and $g_R$:
\begin{eqnarray}\label{id:FJ_fr_gr}
  f_R(\tau) &=& \sum_{N_2 \in L_t^+} f_{R,N_2}(\tau_1, z'_1)\,  e\! \left(\left( N_2 - \frac14 \M^{-1}[^t R_2]\right) \tau_2\right), \\ \notag
  g_R(\tau) &=& \sum_{N_2 \in L_t^+} g_{R,N_2}(\tau_1, z'_1)\,  e\! \left(\left( N_2 - \frac14 \M^{-1}[^t R_2]\right) \tau_2\right),
\end{eqnarray}
where we have
\begin{eqnarray*}
  f_{R,N_2}(\tau_1,z'_1)
  &=&
  \sum_{N_1 \in L_{n-t}^+, N_3 \in \Z^{(n-t,t)}}
  C_\phi(\begin{pmatrix} N_1 & \frac12 N_3 \\ \frac12 ^t N_3 & N_2 \end{pmatrix}, R)
  \\
  && \times
  e( (N_1 - \frac14 \M^{-1}[^t R_1])\tau_1 + (N_3 - \frac12 R_1 \M^{-1} {^t R_2}) {^t z'_1})
\end{eqnarray*}
and $g_{R,N_2}$ can be written similarly
by replacing $f_{R,N_2}$ (resp. $C_\phi$) by $g_{R,N_2}$ (resp. $C_\psi$).

\begin{prop}\label{prop:id:I_t}
We have the identity
\begin{equation}\label{id:I_t}
\begin{split}
   &
   I_t(\phi_\M,\psi_\M;s) \\
  =
  &\ \frac{(1+\delta_{t,n})^2}{2}
  \pi^{\frac14 t(t-1)}   \prod_{i=1}^t \Gamma\! \left(s+k-\frac{r}{2}-(n+1)+\frac{t+1}{2} - \frac{i-1}{2}\right) \\
  &
  \times 
    \int_{\Gamma_{n-t} \backslash \H_{n-t}}
    \int_{D_t(\tau_1) }
    \sum_{R_1 \!\! \mod \Z^{(n-t,r)} (2 \M)}
    \sum_{ \N \in B_{t,r}(\Z) \backslash L_{t,r}^+(\M) }
    \frac{1}{|\epsilon_{t,r}(\N)|}  \\
   &
   \times
      \det(\pi(4N_2 - \M^{-1}[^t R_2]))^{-k+\frac{r}{2}-s+(n+1)-\frac{t+1}{2}}
   \\
    &
    \times  f_{R,N_2}(\tau_1, z'_1) \overline{g_{ R, N_2}(\tau_1,z'_1 )}
      \, e(\frac{\sqrt{-1}}{2} \left(4N_2 -  \M^{-1}[^t R_2] \right) v_1^{-1}[y'_1]) 
    \det(v_1)^{k-\frac{r}{2}-(n+1)} \\
    &
    \times
    \, d x'_1\, d y'_1\, d u_1\, d v_1.
\end{split}
\end{equation}
\end{prop}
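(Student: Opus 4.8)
The plan is to start from the intermediate expression for $I_t(\phi_\M,\psi_\M;s)$ obtained by unfolding the Eisenstein series $E_t^{(n)}(s;\tau)$ against the fundamental domain $\Gamma_n\backslash\H_n$, namely
\[
I_t(\phi_\M,\psi_\M;s)
=\frac{1+\delta_{t,n}}{2}
\int_{\Gamma_{n-t}\backslash\H_{n-t}}
\int_{P_{0,t}\backslash\widetilde{D_t(\tau_1)}}
\sum_{R\,\mathrm{mod}\,\Z^{(n,r)}(2\M)} f_R(\tau)\overline{g_R(\tau)}
\det(v)^{k-\frac r2+s-(n+1)}\det(v_1)^{-s}\,dx'_1\,dy'_1\,du_2\,dv_2\,du_1\,dv_1.
\]
Into this I would substitute the Fourier--Jacobi expansions~\eqref{id:FJ_fr_gr} of $f_R$ and $g_R$ in the variable $\tau_2$, write $R={}^t(\,{}^tR_1\ {}^tR_2\,)$, and carry out the integration over $u_2$ (i.e.\ $\mathrm{Re}(\tau_2)$) first: this kills all cross terms and forces the two Fourier--Jacobi indices $N_2,N_2'$ to coincide, producing a single sum over $N_2\in L_t^+$ together with the exponential $e(\sqrt{-1}(N_2-\tfrac14\M^{-1}[{}^tR_2])\,\mathrm{Tr}(\cdots))$ in $v_2$.

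Next I would evaluate the $v_2$-integral. After the substitution $v=\begin{pmatrix}v_1&y'_1\\{}^ty'_1&v_2\end{pmatrix}$ and the standard identity $\det(v)=\det(v_1)\det(v_2-v_1^{-1}[y'_1])$, changing variables $v_2\mapsto v_2-v_1^{-1}[y'_1]$ turns the integral over the cone $\mathrm{Sym}_t^+$ of $\det(v_2)^{s+k-r/2-(n+1)}e(\sqrt{-1}(4N_2-\M^{-1}[{}^tR_2])v_2/4\cdot\text{(trace)})$ into a Siegel gamma integral; this is where the factor $\pi^{\frac14 t(t-1)}\prod_{i=1}^t\Gamma(s+k-\tfrac r2-(n+1)+\tfrac{t+1}{2}-\tfrac{i-1}{2})$ and the power $\det(\pi(4N_2-\M^{-1}[{}^tR_2]))^{-k+r/2-s+(n+1)-(t+1)/2}$ come out, after absorbing the $4$'s and the $\M$-conjugation via the $\det$-identity $\det(4N_2-\M^{-1}[{}^tR_2])=4^t\det\N/\det\M$ used already in Lemma~\ref{lem:Diri_conv}. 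The leftover $\det(v_1)^{-(n+1)+k-r/2}$ and the exponential $e(\tfrac{\sqrt{-1}}{2}(4N_2-\M^{-1}[{}^tR_2])v_1^{-1}[y'_1])$ match the claimed integrand in~\eqref{id:I_t}.

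Finally I would handle the $P_{0,t}$-quotient: the remaining data $(z'_1,\tau_2)\in P_{0,t}\backslash\widetilde{D_t(\tau_1)}$, once $\tau_2$ has been integrated out, leaves an action of $GL_t(\Z)$ on the pair $(N_2,R_2)$, and on $z'_1\in D_t(\tau_1)$. Combining the $GL_t(\Z)$-action on $N_2\in L_t^+$ with the shift of $R_2$ modulo $\Z^{(t,r)}(2\M)$ identifies the pair $(N_2,R_2)$ up to equivalence with a class $\N=\begin{pmatrix}N_2&\frac12R_2\\\frac12{}^tR_2&\M\end{pmatrix}\in B_{t,r}(\Z)\backslash L_{t,r}^+(\M)$, with the stabilizer contributing the weight $\frac1{|\epsilon_{t,r}(\N)|}$; simultaneously $R_1$ gets reduced only modulo $\Z^{(n-t,r)}(2\M)$, and the quotient of $z'_1$ shrinks from $P_{0,t}\backslash\widetilde{D_t(\tau_1)}$ down to $D_t(\tau_1)$, introducing the extra factor $(1+\delta_{t,n})$ exactly as in the unfolding step (the $\tau_1$-integral over $\Gamma_{n-t}\backslash\H_{n-t}$ is untouched). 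Assembling these three factors gives the constant $\frac{(1+\delta_{t,n})^2}{2}\pi^{\frac14 t(t-1)}\prod_{i=1}^t\Gamma(\cdots)$ in front.

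The main obstacle I anticipate is the bookkeeping in the last step: making precise how the $P_{0,t}$-action on $\widetilde{D_t(\tau_1)}$ descends, after the $\tau_2$-integration, to the joint $GL_t(\Z)$-action on $(N_2,R_2,z'_1)$, correctly tracking the modulus $\Z^{(t,r)}(2\M)$ for $R_2$ versus $\Z^{(n-t,r)}(2\M)$ for $R_1$, and verifying that the stabilizer really is $\epsilon_{t,r}(\N)$ rather than some smaller or larger group. Convergence of all the interchanges of summation and integration is guaranteed for $\mathrm{Re}(s)$ large by Lemma~\ref{lem:Diri_conv} together with the estimate of Lemma~\ref{lem:FJc_norm}, so that part is routine; likewise the gamma-integral evaluation in the second step is the classical Siegel integral and needs no new input.
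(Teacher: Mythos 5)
Your proposal follows essentially the same route as the paper's proof: unfold the Eisenstein series, insert the Fourier--Jacobi expansions of $f_R,g_R$ and integrate out $u_2$, shift $v_2$ by $v_1^{-1}[y'_1]$ and evaluate the resulting Siegel gamma integral, and reassemble the $(N_2,R_2)$-sum into $B_{t,r}(\Z)\backslash L_{t,r}^+(\M)$ via the $GL(t,\Z)$-action (the paper makes this precise through the transformation law $f_{(\begin{smallmatrix}1&\\&{}^tA\end{smallmatrix})R,\,N_2[A]}(\tau_1,z'_1)=f_{R,N_2}(\tau_1,z'_1\,{}^tA)$ and the isomorphism $B_{t,r}^{\infty}(\Z)\backslash B_{t,r}(\Z)\cong GL(t,\Z)$, which resolves exactly the bookkeeping issue you flag, with the double cover at $t=n$ giving the extra $(1+\delta_{t,n})$). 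The argument is correct as outlined; only minor slips (e.g.\ the stray $1/4$ in the intermediate exponential, which should be $1/2$) would need cleaning up.
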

\begin{proof}
We obtain
\begin{eqnarray*}
  &&
   I_t(\phi_\M,\psi_\M;s) \\
  &=&
  \frac{1 + \delta_{t,n}}{2}
  \int_{\Gamma_{n-t} \backslash \H_{n-t}}
  \int_{P_{0,t} \backslash \widetilde{D_t(\tau_1)}}
    \sum_{R \!\! \mod \Z^{(n,r)} (2 \M)}
    \sum_{N_2, N'_2 \in L_t^+}
    f_{R,N_2}(\tau_1, z'_1) \overline{g_{R,N'_2}(\tau_1,z'_1)}\\
    &&
    \times
    e(N_2 \tau_2 - N'_2 \overline{\tau_2} - \frac{\sqrt{-1}}{2} \M^{-1}[^t R_2] v_2) 
    \det(v)^{k-\frac{r}{2}+s-(n+1)} 
    \det(v_1)^{-s} \\
   &&
   \times
    \, d x'_1\, d y'_1\, d u_2\, d v_2\, d u_1\, d v_1.
\end{eqnarray*}
Since
$\displaystyle{\int_{Sym_t(\R)\slash Sym_t(\Z)} e(N_2 \tau_2 - N'_2 \overline{\tau_2})} \, d u_2
= \delta_{N_2, N'_2}$
and since $\det(v) = \det(v_1) \det(v_2 - v_1^{-1}[y'_1])$,
we have
\begin{eqnarray*}
  &&
   I_t(\phi_\M,\psi_\M;s) \\
  &=&
  \frac{1 + \delta_{t,n}}{2}
  \int_{\Gamma_{n-t} \backslash \H_{n-t}}
  \int_{GL(t,\Z) \backslash D_t(\tau_1)' }
    \sum_{R \!\! \mod \Z^{(n,r)} (2 \M)}
    \sum_{N_2 \in L_t^+}
    f_{R,N_2}(\tau_1, z'_1) \overline{g_{R,N'_2}(\tau_1,z'_1)}\\
    &&
    \times
    e(2\sqrt{-1} N_2 v_2 - \frac{\sqrt{-1}}{2} \M^{-1}[^t R_2] v_2) 
    (\det(v_2 - v_1^{-1} [y'_1]))^{k-\frac{r}{2}+s-(n+1)} 
    \det(v_1)^{k-\frac{r}{2}-(n+1)} \\
   &&
   \times
    \, d v_2 \, d x'_1\, d y'_1\, d u_1\, d v_1,
\end{eqnarray*}
where we put $D_t(\tau_1)' := \left\{ (z'_1,  \sqrt{-1} v_2) \in \widetilde{D_t(\tau_1)} \right\}$
and where $GL(t,\Z)$ acts on $D_t(\tau_1)'$ by
$A \cdot (z'_1,\sqrt{-1}v_2) = (z'_1{^t A}, \sqrt{-1} v_2[^t A])$
for $A \in GL(t,\Z)$ and for $(z'_1, \sqrt{-1}v_2) \in D_t(\tau_1)'$.

We substitute $v_2$ by $v_2 + v_1^{-1}[y'_1]$, then
\begin{eqnarray*}
  &&
   I_t(\phi_\M,\psi_\M;s) \\
  &=&
  \frac{1 + \delta_{t,n}}{2}
  \int_{\Gamma_{n-t} \backslash \H_{n-t}}
  \int_{GL(t,\Z) \backslash D_t(\tau_1)'' }
    \sum_{R \!\! \mod \Z^{(n,r)} (2 \M)}
    \sum_{N_2 \in L_t^+}
    f_{R,N_2}(\tau_1, z'_1) \overline{g_{R,N'_2}(\tau_1,z'_1)}\\
    &&
    \times
    e(\frac{\sqrt{-1}}{2} \left(4N_2 -  \M^{-1}[^t R_2] \right) v_2) \,
    e(\frac{\sqrt{-1}}{2} \left(4N_2 -  \M^{-1}[^t R_2] \right) v_1^{-1}[y'_1]) \\
    &&
    \times
    \det(v_2)^{k-\frac{r}{2}+s-(n+1)} 
    \det(v_1)^{k-\frac{r}{2}-(n+1)} 
    \, d v_2 \, d x'_1\, d y'_1\, d u_1\, d v_1,
\end{eqnarray*}
where we put $D_t(\tau_1)''  := \{ (z'_1, \sqrt{-1}v_2) \, | \,  (z'_1,v_2) \in D_t(\tau_1) \times Sym_t^+(\R) \}$
and where we denote by $Sym_t^+(\R)$ the positive definite symmetric matrices of size $t$ which
entries are real numbers.

Since $\phi_\M$ is a Jacobi form, we obtain
\begin{eqnarray*}
C_\phi(\begin{pmatrix} N_1 & \frac12 N_3 \\ \frac12 ^t N_3 & N_2[A] \end{pmatrix}, 
        \begin{pmatrix} 1_{n-t} & \\ & ^t A \end{pmatrix} R)
&=&
C_\phi(\begin{pmatrix} N_1 & \frac12 N_3 A^{-1} \\ \frac12 {{^t A}^{-1}} ^t N_3 & N_2 \end{pmatrix}, R)
\end{eqnarray*}
for any $A \in GL(t,\Z)$.
Thus, for a fixed $R = \left( \begin{matrix} R_1 \\ R_2 \end{matrix} \right) \in \Z^{(n,r)}$, we have
\begin{eqnarray*}
  f_{\left(\begin{smallmatrix} 1_{n-t} & \\ & ^t A \end{smallmatrix}\right) R,N_2[A]}
  (\tau_1,z'_1)
  &=&
    \sum_{N_1 \in L_{n-t}^+, N_3 \in \Z^{(n-t,t)}}
  C_\phi(\begin{pmatrix} N_1 & \frac12 N_3 \\ \frac12  ^t N_3 & N_2[A] \end{pmatrix}, R)
  \\
  && \times
  e( (N_1 - \frac14 \M^{-1}[^t R_1])\tau_1 + (N_3 - \frac12 R_1 \M^{-1} {^t R_2} A) {^t z'_1 }) \\
  &=&
    \sum_{N_1 \in L_{n-t}^+, N_3 \in \Z^{(n-t,t)}}
  C_\phi(\begin{pmatrix} N_1 & \frac12 N_3 A^{-1}\\ \frac12 {{^t A}^{-1}} ^t N_3 & N_2 \end{pmatrix}, R)
  \\
  && \times
  e( (N_1 - \frac14 \M^{-1}[^t R_1])\tau_1 + (N_3 - \frac12 R_1 \M^{-1} {^t R_2}) {^t (z'_1 {^t A})}) \\
  &=&
  f_{R,N_2}(\tau_1, z'_1{^t A}).
\end{eqnarray*}

We write $\N = \begin{pmatrix} N_2 & \frac12 R_2 \\ \frac12 {^t R_2} & \M \end{pmatrix}$.
The summation 
$ \displaystyle{ \sum_{R \!\! \mod \Z^{(n,r)} (2 \M)}
    \sum_{N_2 \in L_t^+}}
$
equals to the summation
$ \displaystyle{ 
    \sum_{R_1 \!\! \mod \Z^{(n-t,r)} (2 \M)}
    \sum_{R_2 \!\! \mod \Z^{(t,r)} (2 \M)}
    \sum_{N_2 \in L_t^+}},
$
and the summation
$ \displaystyle{ 
    \sum_{R_2 \!\! \mod \Z^{(t,r)} (2 \M)}
    \sum_{N_2 \in L_t^+}}
$
equals to the summation
$
\displaystyle{ \sum_{ \N \in B_{t,r}^{\infty}(\Z) \backslash L_{t,r}^+(\M)  }}
$,
where we put
\begin{eqnarray*}
  B_{t,r}^{\infty}(\Z)
  &:=&
  \left\{ \begin{pmatrix} 1_t & y \\ 0 & 1_r \end{pmatrix} \, | \, 
  y \in \Z^{(t,r)} \right\}.
\end{eqnarray*}
We remark the isomorphism $B_{t,r}^{\infty}(\Z) \backslash B_{t,r}(\Z)  \cong GL(t,\Z)$.

Therefore we have
\begin{eqnarray*}
   &&
   I_t(\phi_\M,\psi_\M;s) \\
  &=&
  \frac{1 + \delta_{t,n}}{2}
  \int_{\Gamma_{n-t} \backslash \H_{n-t}}
  \int_{GL(t,\Z) \backslash D_t(\tau_1)'' }
    \sum_{R_1 \!\! \mod \Z^{(n-t,r)} (2 \M)}
    \sum_{R_2 \!\! \mod \Z^{(t,r)} (2 \M)}
    \sum_{N_2 \in L_t^+} f_{R,N_2}(\tau_1, z'_1) 
    \\
    &&
    \times
    \overline{g_{R,N_2}(\tau_1,z'_1)}
    e(\frac{\sqrt{-1}}{2} \left(4N_2 -  \M^{-1}[^t R_2] \right) v_2) \,
    e(\frac{\sqrt{-1}}{2} \left(4N_2 -  \M^{-1}[^t R_2] \right) v_1^{-1}[y'_1]) \\
    &&
    \times
    \det(v_2)^{k-\frac{r}{2}+s-(n+1)} 
    \det(v_1)^{k-\frac{r}{2}-(n+1)} 
    \, d v_2 \, d x'_1\, d y'_1\, d u_1\, d v_1 \\
  &=&
  \frac{1 + \delta_{t,n}}{2}
  \int_{\Gamma_{n-t} \backslash \H_{n-t}}
  \int_{GL(t,\Z) \backslash D_t(\tau_1)'' }
    \sum_{R_1 \!\! \mod \Z^{(n-t,r)} (2 \M)}
    \sum_{ \N \in B_{t,r}^{\infty}(\Z) \backslash L_{t,r}^+(\M) }
    f_{R,N_2}(\tau_1, z'_1) \\
    &&
    \times
    \overline{g_{R,N_2}(\tau_1,z'_1)}
    e(\frac{\sqrt{-1}}{2} \left(4N_2 -  \M^{-1}[^t R_2] \right) v_2) \,
    e(\frac{\sqrt{-1}}{2} \left(4N_2 -  \M^{-1}[^t R_2] \right) v_1^{-1}[y'_1]) \\
    &&
    \times
    \det(v_2)^{k-\frac{r}{2}+s-(n+1)} 
    \det(v_1)^{k-\frac{r}{2}-(n+1)} 
    \, d v_2 \, d x'_1\, d y'_1\, d u_1\, d v_1,
\end{eqnarray*}
where $\N = \begin{pmatrix} N_2 & \frac12 R_2 \\ \frac12 {^t R_2} & \M \end{pmatrix}$.
We have
\begin{eqnarray*}
   &&
   I_t(\phi_\M,\psi_\M;s) \\
 &=&
  \frac{1 + \delta_{t,n}}{2}
  \int_{\Gamma_{n-t} \backslash \H_{n-t}}
  \int_{GL(t,\Z) \backslash D_t(\tau_1)'' }
    \sum_{R_1 \!\! \mod \Z^{(n-t,r)} (2 \M)}
    \sum_{ \N \in B_{t,r}(\Z) \backslash L_{t,r}^+(\M)} \\
    &&
   \times
   \sum_{ diag(A,1_r) \in  B_{t,r}^{\infty}(\Z) \backslash B_{t,r}(\Z) \slash \epsilon_{t,r}(\N)} 
  f_{\left(\begin{smallmatrix} 1_{n-t} & \\ & ^t A \end{smallmatrix}\right) R,N_2[A]}(\tau_1, z'_1) \overline{g_{\left(\begin{smallmatrix} 1_{n-t} & \\ & ^t A \end{smallmatrix}\right) R, N_2[A]}(\tau_1,z'_1)}\\
    &&
    \times
    e(\frac{\sqrt{-1}}{2} \left(4N_2[A] -  \M^{-1}[^t R_2 A] \right) v_2) \,
    e(\frac{\sqrt{-1}}{2} \left(4N_2[A] -  \M^{-1}[^t R_2 A] \right) v_1^{-1}[y'_1]) \\
    &&
    \times
    \det(v_2)^{k-\frac{r}{2}+s-(n+1)} 
    \det(v_1)^{k-\frac{r}{2}-(n+1)} 
    \, d v_2 \, d x'_1\, d y'_1\, d u_1\, d v_1     \\
 &=&
  \frac{1 + \delta_{t,n}}{2}
  \int_{\Gamma_{n-t} \backslash \H_{n-t}}
  \int_{GL(t,\Z) \backslash D_t(\tau_1)'' }
    \sum_{R_1 \!\! \mod \Z^{(n-t,r)} (2 \M)}
    \sum_{ \N \in B_{t,r}(\Z) \backslash L_{t,r}^+(\M) }
    \sum_{ A  \in GL(t,\Z)} \frac{1}{|\epsilon_{t,r}(\N)|}
     \\
    &&
    \times
    f_{R,N_2}(\tau_1, z'_1 {^t A}) \overline{g_{ R, N_2}(\tau_1,z'_1 {^t A})}\\
    &&
    \times
    e(\frac{\sqrt{-1}}{2} \left(4N_2 -  \M^{-1}[^t R_2] \right) v_2[^t A]) \,
    e(\frac{\sqrt{-1}}{2} \left(4N_2 -  \M^{-1}[^t R_2] \right) v_1^{-1}[y'_1 {^t A}]) \\
    &&
    \times
    \det(v_2)^{k-\frac{r}{2}+s-(n+1)} 
    \det(v_1)^{k-\frac{r}{2}-(n+1)} 
    \, d v_2 \, d x'_1\, d y'_1\, d u_1\, d v_1 \\
 &=&
  \frac{(1 + \delta_{t,n})^2}{2}
  \int_{\Gamma_{n-t} \backslash \H_{n-t}}
  \int_{D_t(\tau_1) } \int_{Sym_t^+(\R)}
    \sum_{R_1 \!\! \mod \Z^{(n-t,r)} (2 \M)}
    \sum_{ \N \in B_{t,r}(\Z) \backslash L_{t,r}^+(\M)}
    \frac{1}{|\epsilon_{t,r}(\N)|} \\
    &&
    \times  f_{R,N_2}(\tau_1, z'_1) \overline{g_{ R, N_2}(\tau_1,z'_1 )}\\
    &&
    \times
    e(\frac{\sqrt{-1}}{2} \left(4N_2 -  \M^{-1}[^t R_2] \right) v_2) \,
    e(\frac{\sqrt{-1}}{2} \left(4N_2 -  \M^{-1}[^t R_2] \right) v_1^{-1}[y'_1]) \\
    &&
    \times
    \det(v_2)^{k-\frac{r}{2}+s-(n+1)} 
    \det(v_1)^{k-\frac{r}{2}-(n+1)} 
    \, d v_2 \, d x'_1\, d y'_1\, d u_1\, d v_1.
\end{eqnarray*}
As for the last identity, we remark that the set
\begin{eqnarray*}
\{ (z'_1 {^t A}, \sqrt{-1} v_2[^t A]) \, | \, 
     (z'_1,\sqrt{-1}v_2) \in GL(t,\Z) \backslash D_t(\tau_1)'', A \in GL(t,\Z) \}
\end{eqnarray*} 
covers $D_t(\tau_1)''$ twice if $t=n$ and once if $t \neq n$.

Here we have
\begin{eqnarray*}
  &&
  \int_{Sym_t^+(\R)}     e(\frac{\sqrt{-1}}{2} \left(4N_2 -  \M^{-1}[^t R_2] \right) v_2) \,
    \det(v_2)^{k-\frac{r}{2}+s-(n+1)}  \, d v_2  \\
  &=&
  \int_{Sym_t^+(\R)}     exp(- \pi\, Tr\left(4N_2 -  \M^{-1}[^t R_2] \right) v_2) \,
    \det(v_2)^{k-\frac{r}{2}+s-(n+1) + \frac{t+1}{2}} \det(v_2)^{-\frac{t+1}{2}}  \, d v_2  \\ 
  &=&
  \det(\pi(4N_2 - \M^{-1}[^t R_2]))^{-k+\frac{r}{2}-s+(n+1)-\frac{t+1}{2}} \\
  &&
  \times \int_{Sym_t^+(\R)}     exp(- Tr(v_2)) \,
    \det(v_2)^{k-\frac{r}{2}+s-(n+1) + \frac{t+1}{2}} \det(v_2)^{-\frac{t+1}{2}}  \, d v_2  \\
  &=&
  \det(\pi(4N_2 - \M^{-1}[^t R_2]))^{-k+\frac{r}{2}-s+(n+1)-\frac{t+1}{2}}
  \pi^{\frac14 t(t-1)}  \\
  &&
  \times  \prod_{i=1}^t \Gamma\! \left(s+k-\frac{r}{2}-(n+1)+\frac{t+1}{2} - \frac{i-1}{2}\right).
\end{eqnarray*}
Here, in the last identity, we used the formula shown by Maass~\cite[p.91, l.10-11]{HM}.

Thus we obtain the identity~(\ref{id:I_t}).
\end{proof}

We denote by $D_{n,r}$ the complex domain $\H_n \times \C^{(n,r)}$.
Let $\phi_\N$ and $\psi_\N$ be Fourier-Jacobi coefficients of $\phi_\M$ and $\psi_\M$
denoted in~(\ref{id:theta_decom_phi_psi}).

We write $\N = \begin{pmatrix} N_2 & \frac12 R_2 \\ \frac12 {^t R_2} & \M \end{pmatrix} \in L_{t,r}^+(\M)$.
For $z' \in \C^{(n-t,t+r)}$ we write $z' = (z'_1\ z'_2)$ with $z'_1 \in \C^{(n-t,t)}$ and $z'_2 \in \C^{(n-t,r)}$.
We have theta decompositions of $\phi_\N$ and $\psi_\N$ as follows:
\begin{eqnarray*}
  \phi_\N(\tau_1,z'_1,z'_2)
  &=&
  \sum_{R_1 \!\! \mod \Z^{(n-t,r)}(2\M)}
  f_{R, N_2}(\tau_1,z'_1)
  \vartheta_{\M,R_2,R_1}(\tau_1,z'_1,z'_2), \\
    \psi_\N(\tau_1,z'_1,z'_2)
  &=&
  \sum_{R_1 \!\! \mod \Z^{(n-t,r)}(2\M)}
  g_{R, N_2}(\tau_1,z'_1)
  \vartheta_{\M,R_2,R_1}(\tau_1,z'_1,z'_2),
\end{eqnarray*}
where we put
\begin{eqnarray}\label{id:vartheta_3}
   \vartheta_{\M,R_2,R_1}(\tau_1,z'_1,z'_2)
   &:=&
   \vartheta_{\M,R}(\tau_1, \frac12 z'_1 R_2 \M^{-1} + z'_2),
\end{eqnarray}
and where $R  = \begin{pmatrix} R_1 \\ R_2 \end{pmatrix} \in \Z^{(n,r)}$,
and where the function $\vartheta_{\M,R}$ is denoted in~(\ref{id:vartheta_2}).
Here we remark that $f_{R,N_2}$ (resp. $g_{R,N_2}$) is appeared in~(\ref{id:FJ_fr_gr})
in the Fourier-Jacobi expansion of $f_R(\tau)$ (resp. $g_R(\tau)$).
We omitted the detail of the proof of these decompositions,
since the proof is similar to~\cite[Lemma 4.1]{matrix_integer}.
In other words, we can say that the Fourier-Jacobi expansions and the theta decompositions
are compatible.

We write $\tau_1 = u_1 + i v_1$, $z'_j = x'_j + i y'_j$ $(j = 1,2)$
with matrices $u_1$, $v_1$ $\in \mbox{Sym}_{n-t}(\R)$, $x'_1$, $y'_1$ $\in \R^{(n-t,t)}$
and $x'_2$, $y'_2$ $\in \R^{(n-t,r)}$.

We need the following lemma to calculate $ D_t(\phi_\M, \psi_\M ; s)$.
\begin{lemma}\label{lem:theta_orth_2}
  We have
 \begin{eqnarray*}
  &&
   \int_{D_r(\tau_1)}
   \vartheta_{\M,R_2,R_1}(\tau_1,z'_1,z'_2)
     \overline{ \vartheta_{\M,R_2,\tilde{R}_1}(\tau_1,z'_1,z'_2)  } \\
   && \times
     e(2 \sqrt{-1} \M^{-1} v_1^{-1}[\frac12 y'_1 R_2 \M^{-1} + y'_2] )
   \, d x'_2 \, d y'_2 \\
   &=&
   \delta_{R_1, \tilde{R}_1} (\det v_1)^{\frac{r}{2}}
   \det (4\M)^{-\frac{n-t}{2}}.
  \end{eqnarray*}
\end{lemma}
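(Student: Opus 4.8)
The plan is to evaluate the theta integral directly, following the classical pattern for the orthogonality of Jacobi theta series. By the definitions~(\ref{id:vartheta_2}) and~(\ref{id:vartheta_3}),
\[
 \vartheta_{\M,R_2,R_1}(\tau_1,z'_1,z'_2)
 = \sum_{\begin{smallmatrix} p\in\Z^{(n-t,r)} \\ p\equiv R_1 \!\!\mod \Z^{(n-t,r)}(2\M)\end{smallmatrix}}
 e\!\left(\tfrac14\, p\M^{-1}{}^{t}p\,\tau_1 + p\,{}^{t}\!\left(\tfrac12 z'_1 R_2\M^{-1}+z'_2\right)\right),
\]
and similarly for $\overline{\vartheta_{\M,R_2,\tilde R_1}}$ with a summation variable $q\equiv\tilde R_1$. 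Multiplying the two series together with the Gaussian weight, the integrand over $D_r(\tau_1)$ becomes a double sum over $(p,q)$, and the Gaussian decay makes everything absolutely convergent, so the summations can be pulled outside the integral.

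Next I would use explicit real coordinates on the fundamental domain, writing $z'_2=\tau_1\beta+\alpha$ with $\alpha,\beta$ running over $[0,1)^{(n-t,r)}$, so that $\mbox{Im}(z'_2)=v_1\beta$ and $dx'_2\,dy'_2=(\det v_1)^{r}\,d\alpha\,d\beta$. The integrand depends on $\alpha$ only through a factor $e\bigl((p-q)\,{}^{t}\alpha\bigr)$, because $\alpha$ is real and so enters neither the imaginary parts (hence not the weight and not the absolute values) nor the $\tau_1$-quadratic terms. Integrating over $\alpha$ gives $\prod_{i,j}\delta_{(p-q)_{i,j},\,0}=\delta_{p,q}$, since $p-q\in\Z^{(n-t,r)}$. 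Hence only the terms with $p=q$ survive, and since $p\equiv R_1$, $q\equiv\tilde R_1$ modulo $\Z^{(n-t,r)}(2\M)$, this already forces $R_1=\tilde R_1$ and produces the factor $\delta_{R_1,\tilde R_1}$.

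For $R_1=\tilde R_1$ and $p=q$ the $u_1$-dependence and the real-part phases disappear, and what is left of the exponent equals $-4\pi$ times the sum of $\tfrac14\,\tr\!\left(p\M^{-1}{}^{t}p\,v_1\right)$, the cross term $\tr\!\left(p\,{}^{t}W\right)$ with $W:=\mbox{Im}\!\left(\tfrac12 z'_1 R_2\M^{-1}+z'_2\right)=\tfrac12 y'_1 R_2\M^{-1}+v_1\beta$, and the quadratic term $\tr\!\left(\M\,{}^{t}W\,v_1^{-1}W\right)$ coming from the weight. Completing the square in $W$ collapses this into the single negative Gaussian $-4\pi\,\tr\!\left(\M\,{}^{t}(W+c_p)\,v_1^{-1}(W+c_p)\right)$ with $c_p=\tfrac12 v_1 p\M^{-1}$. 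Writing $p=R_1+2\mu\M$, $\mu\in\Z^{(n-t,r)}$, one has $c_p=v_1\mu+\tfrac12 v_1 R_1\M^{-1}$, so $W+c_p=v_1(\beta+\mu)+\bigl(\tfrac12 y'_1 R_2\M^{-1}+\tfrac12 v_1 R_1\M^{-1}\bigr)$; thus the sum over $\mu$ together with $\int_{\beta\in[0,1)^{(n-t,r)}}$ unfolds into an integral over all of $\R^{(n-t,r)}$. After the substitution replacing $v_1(\beta+\mu)+(\text{const})$ by a new variable $X$ --- whose Jacobian $(\det v_1)^{r}$ cancels the factor already present --- one is left with the matrix Gaussian $\int_{\R^{(n-t,r)}}\exp\!\left(-4\pi\,\tr(\M\,{}^{t}X\,v_1^{-1}X)\right)dX$, which by the standard formula equals $\det(4\M)^{-(n-t)/2}(\det v_1)^{r/2}$. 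This is exactly the asserted value.

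I expect the unfolding to be the step needing most care: one has to check that, once the square is completed, the coset sum over $p\in R_1+\Z^{(n-t,r)}(2\M)$ and the fundamental-domain integral over $z'_2$ dovetail so as to sweep out $\R^{(n-t,r)}$ exactly once, and that the constants generated by completing the square cancel cleanly (which is what pins down the precise normalization of the Gaussian weight in the statement). The dependence on $x'_1$ and $y'_1$ disappears along the way --- the former via the Kronecker delta in the $\alpha$-integration, the latter absorbed into the constant shift of the Gaussian --- which explains why the right-hand side does not involve $z'_1$.
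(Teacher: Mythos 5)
Your proof is correct, but it takes a genuinely different and more self-contained route than the paper. The paper's argument is a two-step reduction: it unwinds the definition $\vartheta_{\M,R_2,R_1}(\tau_1,z'_1,z'_2)=\vartheta_{\M,R}(\tau_1,\tfrac12 z'_1R_2\M^{-1}+z'_2)$, translates the integration variable $z'_2\mapsto z'_2-\tfrac12 z'_1R_2\M^{-1}$ (legitimate since $D_r(\tau_1)$ is a fundamental domain for lattice translation and the Gaussian weight shifts accordingly), and then simply quotes the standard orthogonality relation for the theta series $\vartheta_{\M,R}$ from \cite[p.~211]{Zi}. What you do instead is prove that orthogonality relation from scratch: expand both theta series, integrate out the real part of $z'_2$ to force $p=q$ (hence $\delta_{R_1,\tilde R_1}$), complete the square with $c_p=\tfrac12 v_1p\M^{-1}$, and unfold the coset sum over $p\equiv R_1\bmod\Z^{(n-t,r)}(2\M)$ against the $\beta$-integral into a matrix Gaussian over $\R^{(n-t,r)}$, whose value $(\det v_1)^{r/2}\det(4\M)^{-(n-t)/2}$ is exactly the claim; the individual steps (the completed square, the cancellation of the two Jacobian factors $(\det v_1)^r$, the Gaussian evaluation) all check out. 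The paper's route buys brevity at the price of an external reference; yours buys transparency and makes visible why the normalization is forced. One remark: the weight in the lemma as printed reads $e(2\sqrt{-1}\,\M^{-1}v_1^{-1}[\cdots])$, but both the application in the proof of (\ref{id:D_t}) and your own computation require $\M$ in place of $\M^{-1}$ (with $\M^{-1}$ the square does not complete and the stated value would be off by $\det(\M)^{n-t}$); you silently worked with the correct weight, which is clearly the intended reading.
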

\begin{proof}
By a straightforward calculation we have
\begin{eqnarray*}
  &&
   \int_{D_r(\tau_1)}
   \vartheta_{\M,R_2,R_1}(\tau_1,z'_1,z'_2)
     \overline{ \vartheta_{\M,R_2,\tilde{R}_1}(\tau_1,z'_1,z'_2)  } \\
   && \times
     e(2 \sqrt{-1} \M^{-1} v_1^{-1}[\frac12 y'_1 R_2 \M^{-1} + y'_2] )
   \, d x'_2 \, d y'_2 \\
  &=&
   \int_{D_r(\tau_1)}
   \vartheta_{\M,R}(\tau_1, \frac12 z'_1 R_2 \M^{-1} + z'_2)
     \overline{   \vartheta_{\M,\tilde{R}}(\tau_1, \frac12 z'_1 R_2 \M^{-1} + z'_2)  } \\
   && \times
     e(2 \sqrt{-1} \M^{-1} v_1^{-1}[\frac12 y'_1 R_2 \M^{-1} + y'_2] )
   \, d x'_2 \, d y'_2  \\
  &=&
   \int_{D_r(\tau_1)}
   \vartheta_{\M,R}(\tau_1, z'_2)
     \overline{   \vartheta_{\M,\tilde{R}}(\tau_1,  z'_2)  }
     e(2 \sqrt{-1} \M^{-1} v_1^{-1}[ y'_2] )
   \, d x'_2 \, d y'_2  \\
  &=&
     \delta_{R, \tilde{R}} (\det v_1)^{\frac{r}{2}}
   \det (4\M)^{-\frac{n-t}{2}} .
\end{eqnarray*}
Here, in the last identity, we used the formula in~\cite[p.211, l.19-20]{Zi}.
\end{proof}

\begin{prop}
We have
\begin{equation}\label{id:D_t}
\begin{split}
   &
    D_t(\phi_{\M},\psi_{\M}; s + k - n  + (t-r-1)/2) \\
   &=
  \frac{1 + \delta_{t,n}}{2}
 \det(2\M)^{-\frac{n-t}{2}} \det(\M)^{-s-k+n-(t-r-1)/2} 2^{-\frac{r(n-t)}{2}}
 4^{t(s+k-n+(t-r-1)/2)} \\
 &\quad
 \times
       \sum_{\mathcal{N} 
    \in B_{t,r}(\Z) \backslash L_{t,r}^+(\M) 
    } \frac{1}{
    |\epsilon_{t,r}(\mathcal{N})| \det(4N_2- \M^{-1}[^t R_2])^{s+k-n+(t-r-1)/2}} \\
  &\quad
    \times
 \int_{\Gamma_{n-t} \backslash \H_{n-t}}
 \int_{D_t(\tau_1)}
 \sum_{R_1 \!\! \mod \Z^{(n-t,r)}(2\M)}
   f_{R,N_2}(\tau_1,z'_1) 
  \overline{
   g_{R,N_2}(\tau_1,z'_1) 
  } \\
  &\quad
  \times
   \det(v_1)^{k-(n+\frac{r}{2}+1)}
 e(2i \left(N_2 - \frac14 \M^{-1}[^t R_2]\right) v_1^{-1} [y'_1]) 
  \,   d x'_1 \, d y'_1 \, d u_1 \, d v_1.
\end{split}
\end{equation}
\end{prop}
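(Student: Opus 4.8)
The plan is as follows. Write $s':=s+k-n+\tfrac{t-r-1}{2}$; the right-hand side of~\eqref{id:D_t} is then built from $D_t(\phi_\M,\psi_\M;s')$. First I would expand each inner product $\langle\phi_\N,\psi_\N\rangle$, $\N\in B_{t,r}(\Z)\backslash L_{t,r}^+(\M)$, as a Petersson integral over $\mathcal F_{n-t,t+r}=\Gamma_{n-t,t+r}^J\backslash(\H_{n-t}\times\C^{(n-t,t+r)})$, substitute into the integrand the theta decompositions of $\phi_\N,\psi_\N$ displayed above (which are compatible with the Fourier--Jacobi expansions, cf.~\cite[Lemma~4.1]{matrix_integer}), and integrate out the theta variable $z'_2\in\C^{(n-t,r)}$ by Lemma~\ref{lem:theta_orth_2}. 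When $t=n$ everything degenerates---$\H_{n-t}$ and $z'_1$ disappear, $\langle\phi_\N,\psi_\N\rangle=\phi_\N\overline{\psi_\N}$ by convention, and the integral on the right is an evaluation---which is what the value $\delta_{t,n}=1$ records.

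The computational backbone is the block identity
\[
  \N \;=\; \begin{pmatrix} N_2-\tfrac14\M^{-1}[{}^t R_2] & 0 \\ 0 & \M \end{pmatrix}\!\left[\begin{pmatrix} 1_t & 0 \\ \tfrac12\M^{-1}{}^t R_2 & 1_r \end{pmatrix}\right], \qquad \N=\smat{N_2}{\frac12 R_2}{\frac12\,{}^t R_2}{\M}.
\]
Taking determinants gives $\det(\N)=4^{-t}\det(\M)\det(4N_2-\M^{-1}[{}^t R_2])$, hence $\det(\N)^{-s'}=4^{ts'}\det(\M)^{-s'}\det(4N_2-\M^{-1}[{}^t R_2])^{-s'}$, which produces the factors $\det(\M)^{-s'}$ and $4^{ts'}$ on the right and moves $\det(4N_2-\M^{-1}[{}^t R_2])^{-s'}$ under the sum. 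Writing $\tau'=\tau_1$, $v_1=\mathrm{Im}(\tau_1)$, $z'=(z'_1,z'_2)$, $y'_j=\mathrm{Im}(z'_j)$ and $w:=\tfrac12 y'_1 R_2\M^{-1}+y'_2$, the same identity applied to $\N\,v_1^{-1}[y']$ gives
\[
  \mathrm{Tr}\bigl(\N\,v_1^{-1}[y']\bigr)=\mathrm{Tr}\bigl((N_2-\tfrac14\M^{-1}[{}^t R_2])\,v_1^{-1}[y'_1]\bigr)+\mathrm{Tr}\bigl(\M\,v_1^{-1}[w]\bigr),
\]
so the Gaussian $e^{-4\pi\,\mathrm{Tr}(\N v_1^{-1}[y'])}$ in the Petersson integrand splits off exactly the index-$\M$ Gaussian that appears in Lemma~\ref{lem:theta_orth_2}.

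Next I would unfold. Modulo the Heisenberg translations and $\Gamma_{n-t}$, the integral over $\mathcal F_{n-t,t+r}$ reduces to an integral over $\Gamma_{n-t}\backslash\H_{n-t}$ fibred over $D_{t+r}(\tau_1)=D_t(\tau_1)\times D_r(\tau_1)$; for $n-t\ge 1$ the element $-1_{2(n-t)}\in\Gamma_{n-t}$ fixes $\tau_1$ and acts by $z'\mapsto -z'$, and since $\phi_\N\overline{\psi_\N}$ (together with the Gaussian) is invariant under $z'\mapsto -z'$, this presentation is a double cover of $\mathcal F_{n-t,t+r}$, which produces the normalisation $\tfrac12$ for $t<n$, i.e.\ the constant $\tfrac{1+\delta_{t,n}}{2}$ in~\eqref{id:D_t}. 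After substituting $\phi_\N=\sum_{R_1}f_{R,N_2}(\tau_1,z'_1)\,\vartheta_{\M,R_2,R_1}$, $\psi_\N=\sum_{\tilde R_1}g_{\tilde R,N_2}(\tau_1,z'_1)\,\vartheta_{\M,R_2,\tilde R_1}$ and using that $f_{R,N_2}$, $g_{R,N_2}$ are independent of $z'_2$, the integral over $z'_2\in D_r(\tau_1)$ hits only $\vartheta_{\M,R_2,R_1}\overline{\vartheta_{\M,R_2,\tilde R_1}}$ against the index-$\M$ Gaussian, and Lemma~\ref{lem:theta_orth_2} evaluates it as $\delta_{R_1,\tilde R_1}(\det v_1)^{r/2}\det(4\M)^{-(n-t)/2}$; this collapses the double sum to the single sum over $R_1$ and eliminates $z'_2$.

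What remains is bookkeeping. The Petersson weight $\det(v_1)^{k-n-r-1}$ times $(\det v_1)^{r/2}$ gives $\det(v_1)^{k-(n+r/2+1)}$; the leftover Gaussian is $e^{-4\pi\,\mathrm{Tr}((N_2-\tfrac14\M^{-1}[{}^t R_2])\,v_1^{-1}[y'_1])}=e\!\bigl(2i(N_2-\tfrac14\M^{-1}[{}^t R_2])\,v_1^{-1}[y'_1]\bigr)$; and $\det(4\M)^{-(n-t)/2}=\det(2\M)^{-(n-t)/2}\,2^{-r(n-t)/2}$. Assembling these together with $\tfrac{1+\delta_{t,n}}{2}$, the decomposition of $\det(\N)^{-s'}$, and the surviving integral over $(\tau_1,z'_1)$ in the real coordinates $u_1,v_1,x'_1,y'_1$ reproduces~\eqref{id:D_t} precisely. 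I expect the main obstacle to be this fundamental-domain bookkeeping: isolating the normalisation $\tfrac{1+\delta_{t,n}}{2}$ in the passage from $\mathcal F_{n-t,t+r}$ to $\Gamma_{n-t}\backslash\H_{n-t}\times D_{t+r}(\tau_1)$, and keeping accurate track of every power of $2$ and of $\det\M$ through the substitutions; the analytic content---theta orthogonality and the compatibility of the theta decomposition with the Fourier--Jacobi expansion---is routine.
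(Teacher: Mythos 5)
Your proposal is correct and follows essentially the same route as the paper's proof: expand each $\langle\phi_\N,\psi_\N\rangle$ as a Petersson integral, insert the theta decompositions compatible with the Fourier--Jacobi expansion, split the Gaussian via the block decomposition of $\N$, integrate out $z'_2$ with Lemma~\ref{lem:theta_orth_2}, and convert $\det(\N)^{-s'}$ using $\det(\N)=4^{-t}\det(\M)\det(4N_2-\M^{-1}[{}^t R_2])$. Your explicit justification of the factor $\frac{1+\delta_{t,n}}{2}$ via the action of $-1_{2(n-t)}$ on the fibre is a welcome elaboration of a normalisation the paper writes down without comment.
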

\begin{proof}
We put $D_r(\tau_1) = \C^{(n-t,r)} / (\tau_1 \Z^{(n-t,r)} + \Z^{(n-t,r)})$.
We have
\begin{eqnarray*}
 \langle \phi_\N, \psi_\N \rangle
 &=&
 \int_{\Gamma^J_{n-t,t+r} \backslash D_{n-t,t+r}}
 \phi_\N(\tau_1,z'_1,z'_2) \overline{\psi_\N(\tau_1,z'_1,z'_2)}
 e(2i \N v_1^{-1}[(y'_1\ y'_2)]) \\
 &&
 \times
 \det(v_1)^{k-(n+r+1)}
 \, d u_1 \, d v_1 \, d x'_1 \, d y'_1 \, d x'_2 \, d y'_2 \\
 &=&
 \frac{1 + \delta_{t,n}}{2}
 \int_{\Gamma_{n-t} \backslash \H_{n-t}}
 \int_{D_t(\tau_1)}
 \int_{D_r(\tau_1)}
 \phi_\N(\tau_1,z'_1,z'_2) \overline{\psi_\N(\tau_1,z'_1,z'_2)}
 e(2i \N v_1^{-1}[(y'_1\ y'_2)]) \\
 &&
 \times
 \det(v_1)^{k-(n+r+1)}
  \, d x'_2 \, d y'_2 \,   d x'_1 \, d y'_1 \, d u_1 \, d v_1 \\
 &=&
 \frac{1 + \delta_{t,n}}{2}
 \int_{\Gamma_{n-t} \backslash \H_{n-t}}
 \int_{D_t(\tau_1)}
 \int_{D_r(\tau_1)}
 \sum_{R_1 \!\! \mod \Z^{(n-t,r)}(2\M)}
   f_{R,N_2}(\tau_1,z'_1) \vartheta_{\M,R_2,R_1}(\tau_1,z'_1,z'_2)
 \\
 &&
 \times
 \sum_{\tilde{R}_1 \!\! \mod \Z^{(n-t,r)}(2\M)}
  \overline{
   g_{\tilde{R},N_2}(\tau_1,z'_1) \vartheta_{\M,R_2,\tilde{R}_1}(\tau_1,z'_1,z'_2)  
  }
 e(2i \N v_1^{-1}[(y'_1\ y'_2)]) \\
 &&
 \times
 \det(v_1)^{k-(n+r+1)}
  \, d x'_2 \, d y'_2 \,   d x'_1 \, d y'_1 \, d u_1 \, d v_1,
\end{eqnarray*}
where we write $R = \begin{pmatrix} R_1 \\ R_2 \end{pmatrix}$
and $\tilde{R} = \begin{pmatrix} \tilde{R}_1 \\ R_2 \end{pmatrix}$.
Since
\begin{eqnarray*}
\N = \begin{pmatrix} N_2 & \frac12 R_2 \\ \frac12 {^t R_2} & \M \end{pmatrix}
= \begin{pmatrix} N_2 - \frac14 \M^{-1}[^t R_2] & 0 \\ 0 & \M \end{pmatrix}
  \left[
    \begin{pmatrix} 1_{t} &  \\ \frac12 \M^{-1} {^t R_2} & 1_r\end{pmatrix}
  \right],
\end{eqnarray*} 
we obtain
\begin{eqnarray*}
 \langle \phi_\N, \psi_\N \rangle
 &=&
  \frac{1 + \delta_{t,n}}{2}
 \int_{\Gamma_{n-t} \backslash \H_{n-t}}
 \int_{D_t(\tau_1)}
 \int_{D_r(\tau_1)}
 \sum_{R_1 \!\! \mod \Z^{(n-t,r)}(2\M)}
   f_{R,N_2}(\tau_1,z'_1) \vartheta_{\M,R_2,R_1}(\tau_1,z'_1,z'_2)
 \\
 &&
 \times
 \sum_{\tilde{R}_1 \!\! \mod \Z^{(n-t,r)}(2\M)}
  \overline{
   g_{\tilde{R},N_2}(\tau_1,z'_1) \vartheta_{\M,R_2,\tilde{R}_1}(\tau_1,z'_1,z'_2)  
  } \\
  &&
  \times
 e(2i \left(N_2 - \frac14 \M^{-1}[^t R_2]\right) v_1^{-1} [y'_1])\ 
 e(2i \M v_1^{-1} \left[\frac12 y'_1 R_2 \M^{-1} + y'_2\right]) \\
 &&
 \times
 \det(v_1)^{k-(n+r+1)}
  \, d x'_2 \, d y'_2 \,   d x'_1 \, d y'_1 \, d u_1 \, d v_1 .
\end{eqnarray*}
Due to Lemma~\ref{lem:theta_orth_2} we obtain
\begin{eqnarray*}
 \langle \phi_\N, \psi_\N \rangle
 &=&
 \frac{1 + \delta_{t,n}}{2}
 \int_{\Gamma_{n-t} \backslash \H_{n-t}}
 \int_{D_t(\tau_1)}
 \sum_{R_1 \!\! \mod \Z^{(n-t,r)}(2\M)}
   f_{R,N_2}(\tau_1,z'_1) 
  \overline{
   g_{R,N_2}(\tau_1,z'_1) 
  } \\
  &&
  \times
 e(2i \left(N_2 - \frac14 \M^{-1}[^t R_2]\right) v_1^{-1} [y'_1]) \\
 &&
 \times
 \det(v_1)^{k-(n+\frac{r}{2}+1)}
 \det(2\M)^{-\frac{n-t}{2}} 2^{-\frac{r(n-t)}{2}}
  \,   d x'_1 \, d y'_1 \, d u_1 \, d v_1.
\end{eqnarray*}
Therefore we have
\begin{eqnarray*}
   &&
    D_t(\phi_{\M},\psi_{\M}; s + k - n  + (t-r-1)/2) \\
   &=&
 \frac{1 + \delta_{t,n}}{2}
 \det(2\M)^{-\frac{n-t}{2}} 2^{-\frac{r(n-t)}{2}} \\
 &&
 \times
       \sum_{\mathcal{N} 
    \in B_{t,r}(\Z) \backslash L_{t,r}^+(\M) 
    } \frac{1}{
    |\epsilon_{t,r}(\mathcal{N})| \det(\mathcal{N})^{s+k-n+(t-r-1)/2}} \\
    &&
    \times
 \int_{\Gamma_{n-t} \backslash \H_{n-t}}
 \int_{D_t(\tau_1)}
 \sum_{R_1 \!\! \mod \Z^{(n-t,r)}(2\M)}
   f_{R,N_2}(\tau_1,z'_1) 
  \overline{
   g_{R,N_2}(\tau_1,z'_1) 
  } \\
  &&
  \times
   \det(v_1)^{k-(n+\frac{r}{2}+1)}
 e(2i \left(N_2 - \frac14 \M^{-1}[^t R_2]\right) v_1^{-1} [y'_1]) 
  \,   d x'_1 \, d y'_1 \, d u_1 \, d v_1 \\
   &=&
 \frac{1 + \delta_{t,n}}{2}
 \det(2\M)^{-\frac{n-t}{2}} \det(\M)^{-s-k+n-(t-r-1)/2} 2^{-\frac{r(n-t)}{2}}
 4^{t(s+k-n+(t-r-1)/2)} \\
 &&
 \times
       \sum_{\mathcal{N} 
    \in B_{t,r}(\Z) \backslash L_{t,r}^+(\M) 
    } \frac{1}{
    |\epsilon_{t,r}(\mathcal{N})| \det(4N_2- \M^{-1}[^t R_2])^{s+k-n+(t-r-1)/2}} \\
    &&
    \times
 \int_{\Gamma_{n-t} \backslash \H_{n-t}}
 \int_{D_t(\tau_1)}
 \sum_{R_1 \!\! \mod \Z^{(n-t,r)}(2\M)}
   f_{R,N_2}(\tau_1,z'_1) 
  \overline{
   g_{R,N_2}(\tau_1,z'_1) 
  } \\
  &&
  \times
   \det(v_1)^{k-(n+\frac{r}{2}+1)}
 e(2i \left(N_2 - \frac14 \M^{-1}[^t R_2]\right) v_1^{-1} [y'_1]) 
  \,   d x'_1 \, d y'_1 \, d u_1 \, d v_1.
\end{eqnarray*}
We conclude this proposition.
\end{proof}

By comparing the identity~(\ref{id:I_t}) with the identity~(\ref{id:D_t}),
we obtain Proposition~\ref{prop:int_D}.


\vspace{1cm}

\noindent
Department of Mathematics, Joetsu University of Education,\\
1 Yamayashikimachi, Joetsu, Niigata 943-8512, JAPAN\\
e-mail hayasida@juen.ac.jp


\begin{thebibliography}{99}
 \bibitem[E-Z 85]{EZ}
  M.~Eichler and D.~Zagier : Theory of Jacobi Forms, Progress in
	 Math.\ {\bf 55}, Birkh\"auser, Boston-Basel-Stuttgart, (1985).
\bibitem[Ha 18]{matrix_integer}
 S.~Hayashida~:
   On Kohnen plus-space of Jacobi forms of half integral weight of matrix index,
   {\it Osaka J. Math.}, \ {\bf 55}, no.3 (2018), 499--522.
\bibitem[Ib 92]{Ib}
  T.~Ibukiyama : On Jacobi forms and Siegel modular forms of half
	 integral weights, {\it Comment.\ Math.\ Univ.\ St.\ Paul.}\ {\bf 41},
	 no.2 (1992), 109--124.
\bibitem[Ik 01]{Ik}
  T.~Ikeda : On the lifting of elliptic cusp forms to Siegel cusp forms of degree 2n, 
         {\it Ann.\ of Math.\ (2)}\ {\bf 154}, no.3 (2001), 641--681.
\bibitem[I-M 03]{ImaMa}
  \"O.~Imamo\=glu and Y.~Martin :
  On a Rankin-Selberg convolution of two variables for Siegel modular forms,
  {\it  Forum Math.}\  {\bf 15}, no. 4 (2003), 565--589.
\bibitem[Kal 77]{kalinin} 
  V. L.~Kalinin~:
  Eisenstein series on the symplectic group,
  {\it Mat. USSR Sb.}\ {\bf 32} (1977), 449--476.
\bibitem[Kal 84]{kal84} 
  V. L.~Kalinin~:
  Analytic properties of the convolution products of genus g,
  {\it Math. USSR Sb.}\ {\bf 48} (1984), 193--200.
\bibitem[K-K 08]{KaKa:Dirichlet}
  H.~Katsurada and H.~Kawamura~:
  A certain Dirichlet series of Rankin-Selberg type associated with the Ikeda lifting,
  {\it J. Number Theory}\ {\bf 128}, no. 7 (2008), 2025--2052.
\bibitem[K-K 15]{KaKa}
  H.~Katsurada and H.~Kawamura~:
  Ikeda's conjecture on the period of the Duke-Imamoglu-Ikeda lift,
     {\it Proc. Lond. Math. Soc. (3)}\ {\bf 111}, no. 2 (2015),  445--483.
\bibitem[Ko 80]{Ko}
  W.~Kohnen : Modular forms of half integral weight on $\Gamma_{0}(4)$, 
	 {\it Math,\ Ann.}\ {\bf 248} (1980), 249--266.
\bibitem[K-S 89]{KoSk}
  W.~Kohnen and N.-P. Skoruppa : 
  A certain Dirichlet series attached to Siegel modular forms of degree two.
  {\it Invent. Math.}\ {\bf 95}, no.3 (1989), 541--558. 
\bibitem[K-Z 81]{KoZa}
  W.~Kohnen and D.~Zagier : Values of L-series of modular forms at the center of the critical strip.
  {\it Invent. Math.}\ {\bf 64}, no. 2 (1981),  175--198.
\bibitem[Ma 71]{HM}
 H.~Maass : Siegel's modular forms and Dirichlet series. 
 {\it Lecture Notes in Mathematics,} {\bf 216}. (1971) Springer-Verlag.
\bibitem[Ma 73]{HM73}
  H.~Maass : 
 Dirichletsche Reihen und Modulformen zweiten Grades.
  {\it Acta Arith.}\ {\bf 24} (1973), 225--238. 
\bibitem[Shi 75]{Shintani}
  T.~Shintani : On zeta-functions associated with the vector space of quadratic forms,
  {\it J. Fac. Sci. Univ. Tokyo Sect. I A Math.}\  {\bf 22} (1975), 25--65.
 \bibitem[Ya 90]{Ya3}
  T.~Yamazaki : Rankin-Selberg method for Siegel cusp forms,
  {\it Nagoya Math. J.}\ {\bf 120} (1990), 35--49.
 \bibitem[Zi 89]{Zi} 
  C.~Ziegler : Jacobi forms of higher degree, {\it Abh.\ Math.\ Sem.\ Univ.\  
  Hamburg.}\ {\bf 59} (1989), 191--224.  
\end{thebibliography}
\end{document}